\documentclass{article}

\usepackage{arxiv}
\usepackage{natbib}
\usepackage{url}

\usepackage{graphicx}
\usepackage{amsmath,amssymb,amsthm}
\usepackage[usenames,dvipsnames]{xcolor}
\usepackage{xfrac}
\usepackage{booktabs,multirow,paralist,adjustbox,array} 
\usepackage{pifont}
\usepackage{afterpage}
\newtheorem{assumption}{Assumption}
\newtheorem{definition}{Definition}
\newtheorem{lemma}{Lemma}
\newtheorem{theorem}{Theorem}
\usepackage{bbm}
\usepackage{algorithm}
\usepackage{algorithmic}

\newcommand{\Stepsize}[1]{\mathcal{A}_{#1}} 
\newcommand{\stepsize}[2]{\alpha_{#1}^{#2}}
\newcommand{\stepsizebar}{\bar{\alpha}}
\newcommand{\constG}{\kappa}

\newcommand{\iterTrue}{I_k^{\text{true}}}
\newcommand{\iterSucc}{I_k^{\text{succ}}}
\newcommand{\iterLarge}{I_k^{\text{large}}}
\newcommand{\iterLargebar}{I_k^{\text{large+}}}
\newcommand{\pastone}{\mathcal{F}_{k-1}}
\newcommand{\pastonei}{\mathcal{F}_{i-1}}

\def\EE{\mathbb{E}}
\newcommand{\R}{{\mathbb R}} 

\newcommand{\Prob} {\mathbb{P}} 

\newcommand{\Alpha}{{\cal A}}
\newcommand{\norm}[1]{\left\Vert#1\right\Vert}







\newcommand\blfootnote[1]{%
  \begingroup
  \renewcommand\thefootnote{}\footnote{#1}%
  \addtocounter{footnote}{-1}%
  \endgroup
}

\title{Stochastic ISTA/FISTA Adaptive Step Search Algorithms for Convex Composite Optimization}

\author{Lam M. Nguyen$^{1}$, Katya Scheinberg$^{2}$, Trang H. Tran$^{3*}$ \\
$^{1}$ IBM Research, Thomas J. Watson Research Center, Yorktown Heights, NY, USA \\
$^{2}$ H. Milton Stewart School of Industrial and Systems Engineering,  
Georgia Institute of Technology, 
Atlanta, GA, USA \\
$^{3}$ School of Operations Research and Information Engineering, Cornell University, Ithaca, NY, USA\\
\\
\texttt{LamNguyen.MLTD@ibm.com}, \texttt{katya.scheinberg@isye.gatech.edu}, \texttt{htt27@cornell.edu}
}

\begin{document}

\maketitle

\begin{abstract}
We develop and analyze stochastic variants of ISTA and a full backtracking FISTA algorithms \cite{Beck-Teboulle,Scheinberg-Goldfarb-Bai}
for composite optimization without the assumption that stochastic gradient is an unbiased estimator. This work extends analysis of inexact fixed step ISTA/FISTA in \cite{Schmidt-Roux-Bach}  to the case of stochastic gradient estimates and adaptive step-size parameter chosen by backtracking. It also extends the framework for analyzing stochastic line-search method in \cite{Cartis-Scheinberg} to the proximal gradient framework as well as to the accelerated first order methods. 
\end{abstract}

\blfootnote{$^{*}$ Corresponding author \\ 
This work was partially supported by NSF Grants CCF 20-08434, CCF 21-40057 and ONR award N00014-22-1-215.}

\section{Introduction}
In this paper we  consider, what is often referred to as,  a convex composite optimization problem of the form: 
\begin{align*}
\min_{x \in \R^n} F(x) = f(x) + h(x),  
\end{align*}
under the following assumptions: 
\begin{assumption}\label{assumpt:convex_smooth}
 Function  $f$ is convex and $L$-smooth, function $h$ is proper, closed and convex.
\end{assumption}
\begin{assumption}\label{assumpt:bounded_below}
There exists a global minimizer of $F(x)$, $x^*$,  such that $F(x^*)=F^* > -\infty$. 
  \end{assumption}
  
This class of problems has been extensively studied in recent literature due to broad applications in image processing and data science, e.g., see \cite{Chambolle-DeVore-Nam-YongLee-Lucier,Combettes-Pesquet,Hansen-Nagy-OLeary,Nesterov-Nemirovski,Wainwright}. When $h(x)$ has particularly simple form, such that a proximal operator
\begin{align}
\text{prox}_h (y) = {\arg \min}_{x \in \R^n} \left\{h(x) +\frac{1}{2}\|x-y\|^2\right\},  \label{eq:prox_oper}
\end{align}
 can be computed efficiently, a popular and effective class of algorithms for these problems is proximal gradient method, also known as forward-backward iterative algorithms \cite{Bruck,Combettes-Pesquet,Combettes-Wajs,Passty} within the general framework of splitting methods \cite{Facchinei-Pang}. 
Among them, iterative shrinkage-thresholding algorithms (ISTA) \cite{Chambolle-DeVore-Nam-YongLee-Lucier,Combettes-Wajs,Daubechies-Defrise-DeMol,Figueiredo-Nowak} and its accelerated version FISTA \cite{Beck-Teboulle} are probably the most popular.

One of the features of the algorithms in \cite{Beck-Teboulle} is the adaptive step-size parameter selection using a sufficient reduction condition. 
 In practical deterministic optimization, it is very common to try out step-size  parameter values and select them based on the achieved progress rather than a guess of a global constant $L$, which can be too conservative. Three of such adaptive algorithmic frameworks are line-search, trust-region and adaptive cubic regularization methods  \cite{Cartis-Gould-Toint,Nocedal-Wright}. Recently, all these methods have been analyzed in stochastic setting and shown to maintain favorable convergence properties under various conditions on stochastic function oracles \cite{Blanchet-Cartis-Menickelly-Scheinberg,Cartis-Scheinberg,Jin-et-al,Paquette-Scheinberg}. In particular, it is shown that when stochastic gradient and function estimates satisfy certain accuracy requirements with sufficiently high (but fixed) probability, then the iteration complexity of each method matches that of their deterministic counterpart. 

On the other hand, there have not been similar stochastic versions  {developed for} accelerated gradient methods, such as FISTA.
One obstacle  is that they do not easily lend themselves to fully adaptive modes.  In particular, the  step-size  parameter of FISTA algorithm in \cite{Beck-Teboulle} can be decreased on any given iteration, but it cannot be increased between iterations, as is done by the line-search, trust-region and adaptive cubic regularization methods. While undesirable in practice, this does not cause difficulties in the analysis of deterministic accelerated methods, since the step-size  parameter remains bounded from below by a positive constant throughout the application of the algorithm. This is the consequence of Lipschitz smoothness of $f(x)$. 
To guarantee such a lower bound in the stochastic case, a line-search algorithm in \cite{Vaswani-et-al} and a similar proximal gradient algorithm in  \cite{Franchini-Porta-Ruggiero-Trombini} have to utilize {a very restrictive type of sufficient decrease condition and impose strong assumptions on the stochastic approximation}. It is shown in \cite{Jin-Scheinberg-Xie} that such decrease conditions results in an inferior theoretical results as well as practical performance. We discuss more details in Section \ref{subsec:outline_algo}. If these restrictions are removed, then step-size  parameter can decrease to an arbitrary small value and only by allowing it to increase can complexity bounds be established.  
 
An additional difficulty is that  the complexity analysis of accelerated first order methods is quite different from those of line-search, trust-region and adaptive cubic regularization methods. For the latter, complexity analysis relies on the fact that the objective function decreases by at least some fixed amount on each successful step (or in expectation). Analysis then involves martingale theory to derive bounds on the expected number of iterations that the algorithm performs until certain accuracy is reached. 
This decrease, however, does not necessarily hold for accelerated methods. Thus until this paper it was unclear if ideas from  \cite{Blanchet-Cartis-Menickelly-Scheinberg,Cartis-Scheinberg,Jin-Scheinberg-Xie,Paquette-Scheinberg} can be extended to methods such as FISTA. 

Here, we use the full backtracking version of FISTA, which  was proposed and analyzed in 
  \cite{Scheinberg-Goldfarb-Bai}. We  extend framework from \cite{Cartis-Scheinberg}  to ISTA in \cite{Beck-Teboulle} and 
the full backtracking variant of FISTA in \cite{Scheinberg-Goldfarb-Bai} and show that under certain conditions on the stochastic gradient error the algorithms retain their deterministic iteration complexity bounds, in expectation, {and up to constants}. In this paper, as in  \cite{Cartis-Scheinberg}, we assume that only the gradient $\nabla f(x)$ is approximated by a stochastic estimator, the function values $f(x)$ and $h(x)$, as well as the proximal step are assumed to be exact. 
  { This type of setting can be motivated by model based algorithms in derivative free optimization, where first (and second) derivative approximations are computed approximately by  some finite difference or interpolation schemes based on random set of sample points. Such methods have been popular in the recent literature \cite{Blanchet-Cartis-Menickelly-Scheinberg,Chen-Menickelly-Scheinberg}. But, more generally, while having access to exact function values (exact zeroth-order orcale) is a strong assumption, it can be viewed as the first step in developing expected complexity analysis for any methods with adaptive step size selection and random first-order oracles.  
For example, the analysis in \cite{Cartis-Scheinberg} is based on exact zeroth-order oracle, but later was extended in \cite{Berahas-Cao-Scheinberg} to the case of inexact function estimates with bounded errors, and in \cite{Jin-Scheinberg-Xie} to stochastic function estimate with subexponential error. Similarly analysis of a trust region method in \cite{Bandeiraetal2014,gratton2018complexity} was derived for the case of exact zeroth-order oracle and later extended to stochastic oracles in \cite{Blanchet-Cartis-Menickelly-Scheinberg,Chen-Menickelly-Scheinberg}. In the earlier version of this paper \cite{Tran-Nguyen-Scheinberg} we presented the analysis of the smooth case (with $h(x)\equiv 0$) under the assumption that the function evaluation error is not zero but decays suitable fast. We chose to simplify the analysis in this paper, by focusing on the error in the gradient only. Extension of this analysis with suitably decaying function evaluation error is relatively simple, but as part of future research we would like to consider a variety of assumptions on the zeroth order oracle. }

  {The results of this paper can also be viewed as an extension of analysis in  \cite{Schmidt-Roux-Bach} developed for inexact variants of ISTA and FISTA under the assumption that the gradient of $f$ and the proximal operator \eqref{eq:prox_oper} are computed with some (deterministic) error. Here we  assume that  the proximal operator is exact, and we only relax the assumption that the error in the gradient is deterministic. In  \cite{Schmidt-Roux-Bach}  fixed  ISTA and FISTA are analyzed for a fixed step size, thus sufficient decrease condition is not used and function values are not computed or estimated. Further extension of the analysis presented here to the case of inexact stochastic proximal operators in also of interest and is subject for future research. }

There is significant literature on stochastic proximal gradient methods, e.g. \cite{Franchini-Porta-Ruggiero-Trombini,Hu-Pan-Kwok,Kulunchakov-Mairal,Lan,Nitanda,Poon-Liang-Schoenlieb,Xiao-Zhang}, proximal methods with  variance reduction \cite{Defazio-Bach-Lacoste-Julien,Pham-Nguyen-Phan-Tran-Dinh,Xiao-Zhang} and accelerated versions \cite{Hu-Pan-Kwok,Lan,Nitanda}. 
In all these works stochastic  estimates of  $\nabla f(x)$ are assumed to be unbiased. 

Stochastic accelerated gradient   methods  for smooth optimization (when $h(x)\equiv 0$) have also been analyzed, e.g. \cite{Cohen-Diakonikolas-Orecchia,Schmidt-Roux,Vaswani-Bach-Schmidt}.   These methods also make use of the assumption that an unbiased estimator of $\nabla f(x)$ is computed at each step and the sequence of  {step-sizes} is fixed or predetermined. Since these algorithm do not utilize a sufficient decrease condition, the objective function values $f(x)$ is not estimated.

In summary, we assume that a stochastic (but not necessarily unbiased) estimate of  $\nabla f(x)$ can be computed, but the values of $f(x)$ and $h(x)$ can be computed exactly. These assumptions are quite  {different than} what is typically studied in the literature.  Our results can be viewed as extensions of the following papers:
\begin{itemize}
\item  We extend analysis of inexact ISTA/FISTA in \cite{Schmidt-Roux-Bach} to the case when the error in the gradient computation is stochastic and thus is not deterministically bounded. 
\item We extend the full backtracking  variant of FISTA in  \cite{Scheinberg-Goldfarb-Bai} to the case of stochastic gradient estimation. 
\item We extend the framework for  search methods with generic stochastic oracles in  \cite{Berahas-Cao-Scheinberg,Cartis-Scheinberg} to accelerated and proximal methods. 

\end{itemize}
  
The remainder of the paper is organized as follows. Section \ref{sec:review} contains a discussion about how our results compare to the most relevant existing results. In Section \ref{sec:ista} we introduce stochastic ISTA algorithm, discuss the properties of the underlying stochastic process and derive the expected complexity result for ISTA. In Section \ref{sec:fista} we build on the results from Section \ref{sec:ista} and derive the complexity analysis for the stochastic backtracking FISTA algorithm. Final remarks are presented in Section 
\ref{sec:summary}.

\section{A Brief Review of Relevant Results}\label{sec:review}

There exists substantial research analyzing gradient methods in the context of deterministic  and stochastic noise in gradient computations. It is well known that the convergence rate of a stochastic gradient method is usually worse than that of it deterministic counterpart \cite{Devolder-Glineur-Nesterov,Hu-Pan-Kwok,Lan}, unless  some conditions on vanishing variance of the noise is imposed \cite{Cohen-Diakonikolas-Orecchia,Schmidt-Roux,Schmidt-Roux-Bach}. 
Practical evidence demonstrates that inexact accelerated gradient algorithms  are sensitive to noise levels \cite{Devolder-Glineur-Nesterov,Liu-Belkin}. Theoretical analysis confirms this phenomenon and further suggest that the inexact accelerated algorithms need a higher order of accuracy in gradient estimators than their non-accelerated versions to maintain their fast convergence rates \cite{Devolder-Glineur-Nesterov,Schmidt-Roux-Bach}.

In this paper, we extend the inexact ISTA/FISTA analysis in \cite{Schmidt-Roux-Bach} to the stochastic errors in gradient computation, with a full backtracking framework. We do not require the gradient estimator to be unbiased. While our vanishing noise conditions have the same order as the deterministic errors in \cite{Schmidt-Roux-Bach}, our assumptions are in expectation. Other than this assumption, since our algorithm uses backtracking, we need an additional Assumption \ref{assumpt:key} to that sufficient decrease can be achieved with probability at least $p>1/2$. Although not directly comparable, this assumption is weaker from practical perspective than 
the  vanish noise condition, hence our assumptions are not stronger than in  \cite{Schmidt-Roux-Bach}.

While  {there are} multiple results on accelerated  proximal  stochastic gradient methods, (e.g. \cite{Hu-Pan-Kwok,Lan}) our assumptions and convergence results are different from all prior  methods that we are aware of.  In particular, our analysis bounds the expected number of iterations to achieve $F(x_k)- F^*\leq \epsilon$ as a function of $\epsilon$. We specifically show that this bound is of the same order as the deterministic complexity. All other results  bound
 $\EE[F(x_k)- F^*]$ as a function of iteration number $k$.  {In the appendix, we demonstrate that bounds for expected complexity and expected convergence rate of general stochastic processes are not comparable.}
 
 In addition, as mentioned,  we do not assume that estimates of $\nabla f(x)$ are unbiased, although the bias is assumed to decay with $k$. Most other works on stochastic proximal gradients assume unbiased gradient estimates,  except for a small class that use incremental gradient and shuffling data scheme \cite{Bertsekas,Mishchenko-Khaled-Richtarik}.
 

Specific line of work that closely related to our paper is analysis of gradient methods and accelerated methods under the strong growth condition (SGC) \cite{Schmidt-Roux,Vaswani-Bach-Schmidt}. 
It is easy to see that the strong growth condition implies our Assumption \ref{assumpt:key} when $h(x)\equiv0$,
 while our Assumption \ref{assumpt:key}  does not imply the strong growth condition. Therefore in the smooth case Assumption \ref{assumpt:key} is strictly weaker than the SGC. Because, unlike  \cite{Schmidt-Roux,Vaswani-Bach-Schmidt}, we do not assume unbiased gradient estimators, we need to impose a condition of the rate of decay of expected noise, similar to the conditions on the noise bound in \cite{Schmidt-Roux-Bach}.  
Results in \cite{Vaswani-Bach-Schmidt} are also adapted to the case when strong growth condition is relaxed by adding 
another term. It is easy to see in \cite{Vaswani-Bach-Schmidt} that this term needs to have the same rate of decay as in this paper to attain accelerated convergence rates. In this sense our results are comparable to those in \cite{Vaswani-Bach-Schmidt}.

There is a variety of other related works on inexact proximal methods \cite{Bonettini-Prato-Rebegoldi,Bonettini-Rebegoldi-Ruggiero,Rebegoldi-Calatroni,Sun-Barrio-Jiang-Cheng}, however they investigate inexact solution in the proximal step, which is quite different from the inexact gradient setting in this paper.

\section{ISTA Step Search Algorithm Based on Stochastic First-order Oracle}\label{sec:ista}
In this section we introduce our stochastic first-order oracle and describe a variant of ISTA algorithm based on this oracle. 
This algorithm generates  {several stochastic properties}, that we analyze in this section. Based on these properties we are able to derive the bound on the expected iteration complexity of this algorithm. 
\subsection{Preliminary Notations}
Given a  point $y\in\R^n$ and a step-size parameter $\alpha$, we define the following approximation $Q_\alpha(x, y)$ of the objective function $F$: 
\begin{align*}
Q_\alpha(x, y) = f(y) +\nabla f(y)^\top (x -y)  + \frac{1}{2\alpha}\|x-y\|^2 + h(x).
\end{align*}
The minimizer of this function with respect to $x$, which we denote by $p_\alpha (y)$ is given by the prox-operator: 
\begin{align*}
    p_\alpha (y) := \arg\min_x Q_\alpha(x, y) = \text{prox}_{\alpha h} (y - \alpha \nabla f(y)).
\end{align*}
We let $D_{\alpha}(y)$ denote the gradient mapping of $F$ \cite{Nesterov}:
\begin{align}\label{eq:gradient_mapping}
D_{\alpha}(y) := \frac{1}{\alpha}\Big(y - \text{prox}_{\alpha h}(y - \alpha\nabla{f}(y))\Big)= \frac{1}{\alpha}\Big(y - p_\alpha (y)\Big), \text{ for } \alpha > 0.
\end{align}
Note that when $ h\equiv 0$, then $D_{\alpha}(y) \equiv \nabla{f}(y)$.
We also define the following approximating function based on an estimate  $g$ instead of  $\nabla f(y)$:
\begin{align*}
\tilde{Q}_\alpha(x, y) = f(y) + g^\top (x -y)  + \frac{1}{2\alpha}\|x-y\|^2 + h(x).
\end{align*}
Similarly we define
\begin{align*}
    \tilde{p}_\alpha (y) := \arg\min_x \tilde{Q}_\alpha(x, y) = \text{prox}_{\alpha h} (y - \alpha g).
\end{align*}

\subsection{Stochastic First-order Oracle and Step Search Algorithm}

 We now present  the first-order stochastic oracle  on  which  the methods in this paper rely.  This oracle is similar to the oracles used in \cite{Berahas-Cao-Scheinberg,Carter,Polyaka}. It is assumed to generate a stochastic vector, which is sufficiently close to the true gradient with at least a given probability, but otherwise can be arbitrary. The measure of closeness of the estimate to the true gradient varies by the setting. In \cite{Berahas-Cao-Scheinberg,Cartis-Scheinberg} for instance, the accuracy is proportional to the norm of either the true gradient  {or} the estimated gradient, which goes to zero as the method converges. Here, due to the presence of non-smooth function $h$,  the accuracy is essentially dictated by the current ``closeness'' to optimality as measured by the norm of the gradient mapping $D_{\alpha}(y)$.


{\bf Stochastic first-order oracle SFO$_{\constG, p}(y,\alpha)$.}	 
{\em Given a vector $y\in \R^n$ and a step-size parameter $\alpha > 0$, the oracle computes $G(y,\xi)$, a (random) estimate of the gradient $\nabla f (y)$, such that
\begin{align}\label{eq:first_order1}
\mathbb P_{\xi} \left\{\|G(y,\xi)-\nabla f(y)\|\leq \constG \cdot  \|D_{\alpha}(y)\|   \right\}
\geq p, 
\end{align}
where $\xi$ is a random variable whose distribution may depend on $y$ and $D_{\alpha}(y)$ is defined in \eqref{eq:gradient_mapping}. The values of the constants  $\constG$ and $p$ are intrinsic to the oracle, and satisfy  {$\constG \leq \frac{1}{3}$} and $p > \frac{1}{2}$.
In summary, the input to the oracle is $(y,\alpha)$,  the output is $G(y,\xi)$. 
(We will omit the dependence on $\xi$ for brevity) 
}

  {
Let us discuss  our assumption \eqref{eq:first_order1} on the gradient error and relate it to similar assumptions in the literature. 
 When $ h\equiv 0$ then the right hand side in \eqref{eq:first_order1} reduces to 
 $ \constG\|\nabla f(y)\|$, which is the same assumption  as  used in \cite{Berahas-Cao-Scheinberg} (with slightly different condition on $\constG$, which, in turn, is a  stochastic relaxation
 of the well known {\em norm condition} on the gradient error \cite{Berahas-Cao-Scheinberg,Carter,Polyak}. It also can be seen as related to  the {\em strong growth condition}  often used in the literature for stochastic gradient, (see e.g. \cite{Vaswani-Bach-Schmidt}) when  $ h\equiv 0$. The strong growth condition is stronger than \eqref{eq:first_order1}  in the sense that the bound on the gradient error is imposed in expectation, not simply with a given probability. On the other hand, the general strong growth condition is also weaker, becasue $\constG$ is allowed to be arbitrarily large. When gradient descent type methods are analyzed under strong growth condition, they are applied with a fixed and sufficiently small step size. The step size is inversely proportional to $\constG$ and thus the convergence rate scales with $\constG$. We require $\constG\leq \frac{1}{3}$ here because of the use of the sufficient decrease condition for line search (similarly as is done in \cite{Berahas-Cao-Scheinberg}). Because the sufficient decrease condition uses $\|G(y,\xi)\|$, it is necessary to ensure that this norm is sufficiently large (at least with probability $p$) when $\|\nabla f(y)\|$ is large. It can be observed that if strong growth condition holds with a large constant $\constG$ then our condition can always be satisfied by averaging a  sufficiently large number of stochastic gradient estimates.  }
 
  {Strong growth condition holds naturally in some specfic cases of stochastic gradients, such as 
 randomized finite difference gradient estimation \cite{Scheinberg} and in machine learning under {\em interpolation condition} \cite{Mishkin}. 
 When it does not hold one may want to impose it by averaging a  sufficiently large number of estimates. However, choosing this number may not be possible without the knowledge of  $\|\nabla f(y)\|$. Various alternative stochastic first order oracle conditions have been derived in the literature \cite{byrd2012sample,Cartis-Scheinberg}. They usually only provide heuristic methodology but can be more practical.  }
 
  {
Now, let us turn to the case when $ h\not \equiv 0$. The right hand size now replaces $\|\nabla f(y)\|$ with the norm of the gradient mapping. Deterministic version of this condition has been 
used in  \cite{Baraldi-Kouri} for the analysis of a  proximal inexact trust-region algorithm. A stochastic version imposed in expectation was used in \cite{Beiser-et-al} and an alternative, that is meant to be more practical, is suggested in \cite{Xie-et-al}.  Further variants for general constrained optimization are proposed in \cite{Bollapragada-et-al}. Similarly, in our case, we can consider using 
 $\tilde{D}_\alpha (y)$ in the right hand side instead of  $\tilde{D}_\alpha (y)$, however ensuring 
 this modified condition at least in principle is as hard as \eqref{eq:first_order1}. 
  Implementability of these conditions in various settings and  their practical performance  are still an active area of research that did not yet issue clear winners. Thus, here, we chose to impose the condition in its most natural form as an extension of the smooth case. 
}

We now present what we call the ISTA Step Search Algorithm based on the stochastic first-order oracle SFO$_{\constG, p}(y,\alpha)$. This algorithm is similar to ISTA with backtracking  in \cite{Beck-Teboulle},  except that instead of the gradient $\nabla f(y)$ the algorithm only has access to its stochastic estimate $G(y,\xi)$. Unlike ISTA, the true gradient is used and thus does not change unless the iterate is changed,   in this method  the gradient estimate is recomputed at each iteration whether the iterate is changed or not. Specifically, if the sufficient reduction condition is satisfied, then the step is accepted and the step-size  parameter is increased (successful iteration), otherwise the step is rejected and the step-size  parameter is decreased (unsuccessful iteration). In either case new iteration starts and the gradient estimate is recomputed.  Note that in the  ISTA algorithm described in \cite{Beck-Teboulle} step-size parameter is never increased. However, the analysis of ISTA easily holds when the step-size parameter  is increased on successful iterations. For the stochastic version this increase is essential, therefore we use it here.  The Algorithm \ref{alg:ista_bktr_line_search} makes reference to two iterate sequences $\{y_{k}\}$ and $\{x_{k}\}$ with $y_{k}=x_{k-1}$, for all $k$. One of the sequences is thus redundant for the ISTA algorithm, but we used them for consistent notation and analysis with the FISTA algorithm, where $y_{k}\neq x_{k-1}$, and both of  these two sequences are needed. 
\begin{algorithm}[ht]
\caption{Stochastic ISTA Step Search Algorithm}
\label{alg:ista_bktr_line_search}
\begin{algorithmic}[1]
\STATE {Initialization:} 
Choose an initial point $x_0\in \mathbb{R}^d$. Set $0<\gamma<1$,  with $\alpha_1>0$. Let oracle SFO$_{\constG, p}$ be given with  {$0 \leq \constG \leq \frac{1}{3}$} and $\frac{1}{2} < p \leq 1 $.
\FOR{$k=1,2,\cdots,$}
    \STATE Set  $y_k = x_{k-1}$.
    \STATE Compute the gradient estimate $g_k=G(y_k,\xi_k)$ using the stochastic first-order oracle SFO$_{\constG, p}(y_k,\alpha_k)$.
    Compute the reference point $\tilde{p}_{\alpha_k} (y_k) =\text{prox}_{\alpha_k h} (y_k- \alpha_k g_k)$.
    \STATE Check the sufficient decrease condition $F(\tilde{p}_{\alpha_k} (y_k)) \leq \tilde{Q}_{\alpha_k}(\tilde{p}_{\alpha_k} (y_k), y_k)$.
        \STATE \hspace{1.25em}If unsuccessful: 
            \STATE \hspace{2.5em}Set $x_{k} = x_{k-1}$ and  $\alpha_{k+1} = \gamma\alpha_k$.
        \STATE \hspace{1.25em}If successful: 
            \STATE \hspace{2.5em}Set $x_k = \tilde{p}_{\alpha_k} (y_k)$ and $\alpha_{k+1} = \tfrac{\alpha_k}{\gamma}$.
\ENDFOR
\end{algorithmic}
\end{algorithm}

\subsection{Stochastic Process and Stochastic Oracle Assumption}\label{sec:stoc_proc}

\textit{Filtration.}
Algorithm \ref{alg:ista_bktr_line_search} generates a stochastic process $\{Y_k, X_k, G_k, \Stepsize{k}\}$, where $G_k$ denotes the random gradient estimate computed by the oracle SFO$_{\constG, p}(Y_k,\Stepsize{k})$ at iteration $k$, $Y_k$ and $X_k$ are the random iterates and  $\Stepsize{k}$ is the random step-size  parameter. 
We use $g_k$, $y_k$, $x_k$ and $\alpha_k$  to denotes realizations of these quantities for a particular sample path. We omit explicitly stating the dependence on the sample, for brevity. 
Let $\pastone$ denote the $\sigma$-algebra generated by the random variables $G_0, \hdots, G_{k-1}$. We note that by the nature of the algorithm, distribution of $G_k$ is determined by $Y_k$, while  $X_k$,  $\Stepsize{k+1}$, and, consequently,  $Y_{k+1}$ are  deterministic conditioned on $G_k$. Therefore  process $\{G_k\}$ captures the randomness of the stochastic process $\{Y_k, X_k, G_k,  \Stepsize{k}\}$, in other words,  $\{\pastone\}_{k\ge 1}$ is a filtration. 


\textit{The hitting time $N_\epsilon$ to reach $\epsilon$ accurate solution.} 
Given a target  accuracy $\epsilon>0$, we aim to derive a bound on the expected number of iterations after which
the algorithm generates the iterate that satisfies $\{F(x_k)-F^*\leq \epsilon\}$. We denote the optimality gap $v_k = F(x_k)-F^*$ for every iteration $k$.
Let  $N_\epsilon=\min\{k=1,2, \ldots\ : F(X_k)-F^*\leq \epsilon\}$, that is $N_\epsilon$ is the first iteration for which the event $\{F(x_k)-F^*\leq \epsilon\}$ occurs.  $N_\epsilon$ is a stopping time (hitting time) with respect to the filtration $\{\mathcal{F}_k\}_{k\ge 1}$   as it is defined by the realization of $X_k$.

We find the following indicator variables useful. 

\begin{definition}[Successful Iteration]\label{def:success} 
Let $\iterSucc=\mathbbm{1}\{{\rm Iteration\ } k\ {\rm is\ successful } \}$.
\end{definition}

\begin{definition}[True Iteration]\label{def:good_model}
We say iteration $k$ is {\em true} if 
\begin{align*}
    \|g_k-\nabla f(y_k)\|\leq \constG  \|D_{\alpha_k}(y_k)\|, \  {\constG\leq \frac{1}{3}}.
\end{align*}

In other words, the first-order oracle delivers a sufficiently accurate estimate. 
We use an indicator variable $\iterTrue$ to indicate that iteration $k$ is true. 
\end{definition}

Our key assumption on the oracle is that  it is implementable with  {$\constG \leq \frac{1}{3}$} and $p > \frac{1}{2}$.

\begin{assumption}[Probability of True Iteration]\label{assumpt:key}
We assume that the first-order oracle  SFO$_{\constG, p}$ is implemented in Algorithm \ref{alg:ista_bktr_line_search} with  {$\constG \leq \frac{1}{3}$} and $p > \frac{1}{2}$ for every input $(y_k,\alpha_k)$. 
Then, by the properties of  the oracle and the filtration $\pastone$ we have, 
\begin{align*} 
\Prob(\iterTrue| \pastone) \ge p \text{ for all } k \geq 0.
\end{align*}
\end{assumption}
\subsection{Key Elements of the Analysis}\label{subsec:successful}
Now we are ready to present our analysis, starting with a property of the algorithms that holds for each realization. 
Our first lemma shows that a small step-size parameter  and a true iteration lead to a successful step. This is an equivalent of Lemma 3.1 in \cite{Cartis-Scheinberg}, extended to the proximal step. 

\begin{lemma}
\label{lem:good_est_good_model_successful} 
Suppose that iteration $k$ is true, i.e. 
\begin{align*}
    \|g_k-\nabla f(y_k)\|\leq \constG  \|D_{\alpha_k}(y_k)\|,
\end{align*}
with  {$\constG \leq \frac{1}{3}$} and $D_{\alpha_k}(y_k)$ is the gradient mapping of $F$ at $y_k$. If 
\begin{align*}
\stepsize{k}{} \le \stepsizebar := \frac{1}{L}\left( 1 - \frac{2\constG}{1-\constG}\right) ,
\end{align*}
then the $k$-th step is successful. 

\end{lemma}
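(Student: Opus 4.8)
The plan is to unfold the sufficient decrease condition, cancel the common terms, and reduce everything to a single scalar inequality in $\alpha_k$ that is guaranteed precisely by the hypothesis $\alpha_k \le \bar\alpha$. Writing $y = y_k$, $g = g_k$, $\alpha = \alpha_k$, and $\tilde p = \tilde p_\alpha(y)$, the condition $F(\tilde p) \le \tilde Q_\alpha(\tilde p, y)$ carries $h(\tilde p)$ and $f(y)$ on both sides; after cancelling them it is equivalent to
$$f(\tilde p) \le f(y) + g^\top(\tilde p - y) + \frac{1}{2\alpha}\|\tilde p - y\|^2.$$
First I would bound the left side using $L$-smoothness of $f$, namely $f(\tilde p) \le f(y) + \nabla f(y)^\top(\tilde p - y) + \frac{L}{2}\|\tilde p - y\|^2$. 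Subtracting, it then suffices to establish
$$(\nabla f(y) - g)^\top(\tilde p - y) \le \Big(\frac{1}{2\alpha} - \frac{L}{2}\Big)\|\tilde p - y\|^2.$$

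Next I would apply Cauchy--Schwarz and the true-iteration hypothesis to the left side, giving $(\nabla f(y) - g)^\top(\tilde p - y) \le \kappa\,\|D_\alpha(y)\|\,\|\tilde p - y\|$. The subtle point --- and what I expect to be the main obstacle --- is that the accuracy bound is stated in terms of the gradient mapping $D_\alpha(y)$, which is built from the \emph{true} gradient via $p_\alpha(y)$, whereas the actual step length $\|\tilde p - y\|$ is built from the \emph{estimate} $g$. To bridge the two I would introduce the perturbed gradient mapping $\tilde D_\alpha(y) = \frac{1}{\alpha}(y - \tilde p)$ and exploit nonexpansiveness (the $1$-Lipschitz property) of the prox operator: since $p_\alpha(y) = \text{prox}_{\alpha h}(y - \alpha \nabla f(y))$ and $\tilde p = \text{prox}_{\alpha h}(y - \alpha g)$, nonexpansiveness yields $\|p_\alpha(y) - \tilde p\| \le \alpha\|\nabla f(y) - g\|$, hence $\|D_\alpha(y) - \tilde D_\alpha(y)\| \le \|\nabla f(y) - g\| \le \kappa\|D_\alpha(y)\|$. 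A reverse triangle inequality then gives $\|\tilde D_\alpha(y)\| \ge (1-\kappa)\|D_\alpha(y)\|$, i.e. $\|D_\alpha(y)\| \le \frac{1}{\alpha(1-\kappa)}\|\tilde p - y\|$.

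Substituting this back, the left side is at most $\frac{\kappa}{\alpha(1-\kappa)}\|\tilde p - y\|^2$. If $\tilde p = y$ the decrease condition holds trivially; otherwise I divide through by $\|\tilde p - y\|^2 > 0$, so the remaining goal is the scalar inequality $\frac{\kappa}{\alpha(1-\kappa)} \le \frac{1}{2\alpha} - \frac{L}{2}$. Multiplying by $2\alpha$ and rearranging turns this into $L\alpha \le 1 - \frac{2\kappa}{1-\kappa}$, which is exactly $\alpha \le \bar\alpha$, the hypothesis. This also clarifies the role of $\kappa \le \frac{1}{3}$: it is precisely the condition making $\bar\alpha = \frac{1}{L}\big(1 - \frac{2\kappa}{1-\kappa}\big)$ nonnegative, so that a positive step threshold exists at all. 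The only ingredients I rely on are $L$-smoothness (Assumption \ref{assumpt:convex_smooth}), Cauchy--Schwarz, the definition of a true iteration, and nonexpansiveness of $\text{prox}_{\alpha h}$.
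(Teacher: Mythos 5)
Your proof is correct and takes essentially the same route as the paper's: both reduce the sufficient decrease condition via $L$-smoothness of $f$ to the key inequality $(\nabla f(y_k)-g_k)^\top(\tilde p_{\alpha_k}(y_k)-y_k) \le \bigl(\tfrac{1}{2\alpha_k}-\tfrac{L}{2}\bigr)\|\tilde p_{\alpha_k}(y_k)-y_k\|^2$, then combine the true-iteration bound with nonexpansiveness of $\mathrm{prox}_{\alpha h}$ to obtain $\|D_{\alpha_k}(y_k)\| \le \tfrac{1}{\alpha_k(1-\kappa)}\|\tilde p_{\alpha_k}(y_k)-y_k\|$, and close with the same scalar inequality that is equivalent to $\alpha_k \le \bar\alpha$. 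Your phrasing in terms of the perturbed gradient mapping $\tilde D_{\alpha_k}$ rather than the prox points, and your explicit handling of the degenerate case $\tilde p_{\alpha_k}(y_k)=y_k$, are only cosmetic differences from the paper's argument.
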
 

\begin{proof}

From the $L$-smoothness of $f$ we have
\begin{align*}
    f(\tilde{p}_{\alpha_k} (y_k))  &\le f(y_k) + \nabla f(y_k)^\top(\tilde{p}_{\alpha_k} (y_k) - y_k) + \frac{L}{2}\norm{\tilde{p}_{\alpha_k} (y_k) - y_k}^2 \nonumber\\
     &= f(y_k) +  g_k^\top (\tilde{p}_{\alpha_k} (y_k) - y_k) + [\nabla f(y_k)-g_k]^\top (\tilde{p}_{\alpha_k} (y_k) - y_k)  \nonumber\\
     &\quad+ \frac{1}{2\alpha_k} \norm{\tilde{p}_{\alpha_k} (y_k) - y_k}^2  + \left(\frac{L}{2}- \frac{1}{2\alpha_k}\right)\norm{\tilde{p}_{\alpha_k} (y_k) - y_k}^2 .
\end{align*}
Adding the non-smooth term $ h(\tilde{p}_{\alpha_k} (y_k))$ we have
\begin{align*}
    &F (\tilde{p}_{\alpha_k} (y_k))  \nonumber\\
     &\le f(y_k) +  g_k^\top (\tilde{p}_{\alpha_k} (y_k) - y_k) + \frac{1}{2\alpha_k} \norm{\tilde{p}_{\alpha_k} (y_k) - y_k}^2 + h(\tilde{p}_{\alpha_k} (y_k))  \nonumber\\
     &\quad + [\nabla f(y_k)-g_k]^\top (\tilde{p}_{\alpha_k} (y_k) - y_k) + \left(\frac{L}{2}- \frac{1}{2\alpha_k}\right)\norm{\tilde{p}_{\alpha_k} (y_k) - y_k}^2 
     \nonumber\\
     &= \tilde{Q}_{\alpha_k}(\tilde{p}_{\alpha_k} (y_k), y_k) + [\nabla f(y_k)-g_k]^\top (\tilde{p}_{\alpha_k} (y_k) - y_k) + \left(\frac{L}{2}- \frac{1}{2\alpha_k}\right)\norm{\tilde{p}_{\alpha_k} (y_k) - y_k}^2 .
\end{align*}
To show that the iteration is successful i.e. $F(\tilde{p}_{\alpha_k} (y_k)) \leq \tilde{Q}_{\alpha_k}(\tilde{p}_{\alpha_k} (y_k), y_k)$
we need to prove that 
\begin{align}\label{eq:02}
    [\nabla f(y_k)-g_k]^\top (\tilde{p}_{\alpha_k} (y_k) - y_k) \leq \left(\frac{1}{2\alpha_k} - \frac{L}{2}\right)\norm{\tilde{p}_{\alpha_k} (y_k) - y_k}^2 .
\end{align}
Note that the right hand side is non-negative since $\alpha_k \leq \frac{1}{L}$. 
Our definition of the true iteration leads to 
\begin{align}\label{eq:01}
    \|g_k-\nabla f(y_k)\|
    &\leq \constG \left \|\frac{1}{\alpha_k}\Big(y_k - p_{\alpha_k} (y_k)\Big)\right\|\nonumber  \\
    &\leq  \frac{\constG}{\alpha_k} \left \|y_k - \tilde{p}_{\alpha_k} (y_k)\right\| + \frac{\constG}{\alpha_k} \left \|\tilde{p}_{\alpha_k} (y_k) - p_{\alpha_k} (y_k)\right\|\nonumber  \\
    &=  \frac{\constG}{\alpha_k} \left \|y_k - \tilde{p}_{\alpha_k} (y_k)\right\| + \frac{\constG}{\alpha_k} \norm{\alpha_k\nabla f(y_k)-\alpha_k g_k },
\end{align}
where the last inequality follows  from the nonexpansiveness of the prox-operator $\text{prox}_{\alpha h}$ for  proper, closed, and convex $h(x)$, i.e.,
\begin{align*}
    \|\text{prox}_{\alpha h}(z_1) - \text{prox}_{\alpha h}(z_2)\| \leq \|z_1 - z_2\| \text{ for all } z_1, z_2\in\R^d.
\end{align*}
The bound \eqref{eq:01} is equivalent to
\begin{align*}
    (1-\constG)\norm{\nabla f(y_k)-g_k} \leq  \frac{\constG}{\alpha_k} \left \|y_k - \tilde{p}_{\alpha_k} (y_k)\right\|.
\end{align*}
Therefore we show \eqref{eq:02} by
\begin{align*}
    \norm{\nabla f(y_k)-g_k} &\leq  \frac{\constG}{1-\constG} \frac{1}{\alpha_k} \left \|y_k - \tilde{p}_{\alpha_k} (y_k)\right\|  \nonumber\\
    &\leq \left( \frac{1- \alpha_k  L}{2}\right)\frac{1}{\alpha_k} \left \|y_k - \tilde{p}_{\alpha_k} (y_k)\right\| = \left( \frac{1}{2\alpha_k} - \frac{L}{2}\right)\left \|y_k - \tilde{p}_{\alpha_k} (y_k)\right\|,
\end{align*}
where we use the condition that 
\begin{align*}
    \frac{\constG}{1-\constG} \leq \frac{1- \alpha_k  L}{2},
\end{align*}
which is the consequence of the small step-size parameter: 
\begin{align*}
    \alpha_k  \leq \frac{1}{L}\left( 1 - \frac{2\constG}{1-\constG}\right).
\end{align*}

~\qed \end{proof}

The next part of the analysis follows similar logic to that in \cite{Cartis-Scheinberg} and subsequent works \cite{Berahas-Cao-Scheinberg,Blanchet-Cartis-Menickelly-Scheinberg,Paquette-Scheinberg}. Specifically, the algorithm makes progress when the iterations are true, successful and the step-size parameter $\alpha_k$ is not too small.  Let us call these iterations {\em good}. The key idea is to bound 
 the total expected number of iterations in terms of the  expected number of {\em good} iterations.  
Let us define
$N_G = \sum_{k=1}^{N_\epsilon-1} \mathbbm{1}\{ \Alpha_k \ge \stepsizebar \} \cdot  \mathbbm{1}\{{\rm Iteration\ } k\ {\rm is\ true\ and \ successful} \} $ be the number of {\em good} iterations prior to the stopping time $N_\epsilon$.
 
\begin{theorem}[Bounding $\EE (N_\epsilon)$ based on $\EE(N_G)$]\label{th:framework}
Let $\stepsizebar$ be the value stated in Lemma \ref{lem:good_est_good_model_successful}.
 Under Assumption \ref{assumpt:key} we have 
\begin{align*}
\EE (N_\epsilon -1) \leq \frac{2p}{(2p-1)^2}\left (2\EE(N_G)+ \log_\gamma\left(\frac{\stepsizebar}{\alpha_1}\right)\right).
\end{align*}
\end{theorem}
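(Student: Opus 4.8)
The plan is to follow the stochastic-process framework of \cite{Cartis-Scheinberg} (see also \cite{Blanchet-Cartis-Menickelly-Scheinberg,Paquette-Scheinberg}), adapted to the proximal setting through Lemma \ref{lem:good_est_good_model_successful}. First I would record the multiplicative dynamics of the step-size parameter as an additive random walk: set $W_k := \log_{1/\gamma}(\Stepsize{k}/\alpha_1)$, so that $W_1=0$, a successful iteration gives $W_{k+1}=W_k+1$ and an unsuccessful one gives $W_{k+1}=W_k-1$. In these coordinates the ``large step'' event $\{\Stepsize{k}\ge\stepsizebar\}$ becomes $\{W_k\ge\bar W\}$ with $\bar W:=\log_{1/\gamma}(\stepsizebar/\alpha_1)=-\log_\gamma(\stepsizebar/\alpha_1)$, which is exactly where the $\log_\gamma(\stepsizebar/\alpha_1)$ term of the statement originates. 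The crucial deterministic input is the contrapositive of Lemma \ref{lem:good_est_good_model_successful}: a true iteration with $\Stepsize{k}\le\stepsizebar$ is necessarily successful, so every true and unsuccessful iteration lives in the large-step region, and in particular whenever $W_k<\bar W$ a true iteration forces an up-step of the walk.

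Next I would carry out the pathwise counting of the iterations $1\le k<N_\epsilon$, splitting them by (true/false), (successful/unsuccessful) and (large/small step). A crossing argument on $W_k$ controls the descents: the number of unsuccessful iterations taken while in the large-step region is bounded by the number of up-steps into and within that region plus the initial budget $\max\{0,-\bar W\}$ of admissible descents, so that the large-step iterations number at most twice the successful large-step ones, up to this budget. Among the successful large-step iterations the true ones are precisely the good iterations counted by $N_G$, while the remaining small-step iterations will be shown to be transient because below the threshold the walk has strictly positive drift. This accounting is what produces the factor $2$ in front of $\EE(N_G)$ and the additive term $\log_\gamma(\stepsizebar/\alpha_1)=-\bar W$.

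Finally I would convert these pathwise counts into the expectation bound using the only probabilistic input available, Assumption \ref{assumpt:key}, namely $\Prob(\iterTrue=1\mid\pastone)\ge p$. The partial sums of $\iterTrue-p$ form a submartingale, and in the region $\{W_k<\bar W\}$ a true iteration is an up-step while a false one moves $W_k$ by at most $-1$, so the conditional drift there is at least $p-(1-p)=2p-1>0$. Applying the optional stopping theorem to a suitable linear (or exponential) functional of $W_k$, adjusted by the running count of good iterations, turns the deterministic inequalities of the previous paragraph into $\EE(N_\epsilon-1)\le\frac{2p}{(2p-1)^2}\big(2\EE(N_G)+\log_\gamma(\stepsizebar/\alpha_1)\big)$; heuristically, the two appearances of $(2p-1)$ reflect the conversion of ``true'' counts into ``total'' counts and the drift that prevents the step-size from decaying indefinitely, while the factor $p$ is the usual renewal normalization.

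I expect the main obstacle to be precisely this last conversion. Because the algorithm imposes no \emph{a priori} upper bound on $\Stepsize{k}$, the walk $W_k$ may have negative drift on large-step iterations, so one cannot simply bound its terminal value $\EE(W_{N_\epsilon})$ from above and read off $\EE(N_\epsilon)$; the contribution of large-step iterations with unfavorable drift must instead be absorbed into $N_G$, and the pathwise ledger has to be coupled to the drift estimate through a \emph{single} carefully chosen (super)martingale rather than handled in isolation. Making the constants collapse exactly to $\frac{2p}{(2p-1)^2}$, rather than to some larger multiple, is the delicate bookkeeping that the proof must manage.
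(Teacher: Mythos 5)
Your proposal is correct and takes essentially the same route as the paper: the paper decomposes the iterations before $N_\epsilon$ by the step-size threshold $\stepsizebar$ and by true/false and successful/unsuccessful status, and the random-walk coordinates, the crossing (descent-budget) argument producing $2\EE(N_G)+\log_\gamma(\stepsizebar/\alpha_1)$, and the submartingale counting with drift $2p-1$ that you describe are precisely the contents of Lemmas 2.3 and 2.7 of \cite{Cartis-Scheinberg}, which the paper cites rather than re-derives, before combining them with the same self-referential inequality $\EE(N_\epsilon-1)\leq \frac{1}{2p}\EE(N_\epsilon-1)+\frac{1}{2p-1}\left(2\EE(N_G)+\log_\gamma\left(\frac{\stepsizebar}{\alpha_1}\right)\right)$ to obtain the constant $\frac{2p}{(2p-1)^2}$. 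The only difference is that you propose to reconstruct that imported machinery from scratch, which is a matter of self-containment rather than of mathematical substance.
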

\begin{proof}
By the update rule of $\alpha_k$ in Algorithm \ref{alg:ista_bktr_line_search}, the stochastic process $\{\Alpha_k\}_{k\geq 1}$ follows: 
\begin{align*}
\Alpha_{1}&= \alpha_1, \quad 
\Alpha_{k+1}=\left \{ \begin{array}{ll} 
\gamma^{-1} \Alpha_k & {\rm if\  }  \iterSucc=1,  \\
 \gamma \Alpha_k &   {\rm if\  }  \iterSucc=0 .
\end{array}\right . 
\end{align*}

Let us now define the following indicator random variables: 
\begin{align*}
\iterLarge=\mathbbm{1}\{ \Alpha_k >\stepsizebar\}, \\
\iterLargebar= \mathbbm{1}\{ \Alpha_k \geq \stepsizebar\}.
\end{align*}
Clearly 
\begin{align*}
\EE (N_\epsilon-1) = \EE\left (\sum_{k=1}^{N_\epsilon-1} \iterLarge\right ) + \EE \left (\sum_{k=1}^{N_\epsilon-1} (1-\iterLarge)\right ).
\end{align*}

From Lemma 2.3 in \cite{Cartis-Scheinberg}, under Assumption \ref{assumpt:key} and using the fact that all true iterations are successful when $\alpha_k \le \stepsizebar$, we have: 
\begin{align*}
\EE\left (\sum_{k=1}^{N_\epsilon-1} (1-\iterLarge)\right)\leq \frac{1}{2p}\EE(N_\epsilon -1 ).
\end{align*}

We define 
\begin{itemize}
\item $N_{FS}=\sum_{k=1}^{N_\epsilon-1} \iterLargebar (1-\iterTrue)\iterSucc$, which is the number of false successful iterations with $\Alpha_k\geq \stepsizebar$.
\item $N_F=\sum_{k=1}^{N_\epsilon-1} \iterLargebar (1-\iterTrue)$, which is the number of false  iterations with $\Alpha_k\geq \stepsizebar$.
\item $N_G=\sum_{k=1}^{N_\epsilon-1} \iterLargebar \iterTrue \iterSucc$, which is the number of true successful iterations with $\Alpha_k\geq \stepsizebar$.
\item $N_T=\sum_{k=1}^{N_\epsilon-1} \iterLargebar \iterTrue$, which is the number of true iterations with $\Alpha_k\geq \stepsizebar$.
\item $N_U=\sum_{k=1}^{N_\epsilon-1} \iterLarge  (1-\iterSucc)$, which is the number of unsuccessful iterations with $\Alpha_k> \stepsizebar$.
\end{itemize}

From equations (11) and (12) of Lemma 2.7 in \cite{Cartis-Scheinberg} we have
\begin{align}\label{eq:binom}
\EE(N_{FS})\leq \EE(N_{F})\leq\frac {1-p}{p}[\EE(N_G)+\EE(N_U)],
\end{align}
and
\begin{align}\label{eq:boundM3}
\EE(N_U)\leq \EE(N_{FS})+\EE(N_G) + \log_\gamma(\stepsizebar/\alpha_1).
\end{align}
Plugging \eqref{eq:boundM3} into  \eqref{eq:binom}
we obtain
\begin{align*}
\EE(N_{FS})\leq \frac {1-p}{p}\left[\EE(N_{FS})+2\EE(N_G) + \log_\gamma(\stepsizebar/\alpha_1)\right],
\end{align*}
and, hence,
\begin{align*}
\frac {2p-1}{p}\EE(N_{FS})\leq \frac {1-p}{p}\left[ 2\EE(N_G) + \log_\gamma\left(\frac{\stepsizebar}{\alpha_1}\right)\right].
\end{align*}
This finally implies 
\begin{align}\label{eq:boundN1}
\EE(N_{FS})\leq \frac {1-p}{2p-1}\left [2\EE(N_G)+ \log_\gamma\left(\frac{\stepsizebar}{\alpha_1}\right)\right ].
\end{align}
Now we can bound the expected total  number of  iterations when $\alpha_k>\stepsizebar$, using 
\eqref{eq:boundM3} and \eqref{eq:boundN1}  
and adding the terms to obtain the result of the lemma, namely,
\begin{align*}
\EE(N_F+N_T)&\leq \EE(N_F+N_U+N_G)\leq \frac{1}{p}\EE(N_U+N_G)\nonumber\\
&\leq \frac{1}{2p-1}\left (2\EE(N_G)+ \log_\gamma\left(\frac{\stepsizebar}{\alpha_1}\right)\right).
\end{align*}
Next, from Lemma 2.7 in \cite{Cartis-Scheinberg},\footnote{Final statement of the Lemma 2.7 in \cite{Cartis-Scheinberg}  replaces $\EE(N_G)$ with its bound, but here we
are yet to derive such a  bound, therefore we explicitly use $\EE(N_G)$.}
\begin{align*}
\EE\left (\sum_{k=1}^{N_\epsilon-1}\iterLarge \right )\leq \frac{1}{2p-1}\left (2\EE(N_G)+ \log_\gamma\left(\frac{\stepsizebar}{\alpha_1}\right)\right). 
\end{align*}
Hence, using the two previous results we have 
\begin{align*}
\EE (N_\epsilon -1)\leq \frac{1}{2p} \EE (N_\epsilon -1) +   \frac{1}{2p-1} \left (2\EE(N_G)+ \log_\gamma\left(\frac{\stepsizebar}{\alpha_1}\right)\right).
\end{align*}
The result of the theorem follows. 
~\qed \end{proof}

\subsection{Analysis for Stochastic ISTA Step Search}\label{subsec:analysis1}

We now derive the bound on $\EE(N_G)$ based on the progress made on each iteration. 
We first state the following lemma that holds for each realization of Algorithm \ref{alg:ista_bktr_line_search}. 
Without the stochastic term, this lemma is equivalent to the bound (3.6) for the ISTA backtracking algorithm in \cite{Beck-Teboulle}.
\begin{lemma}[Bounds for a successful step]\label{lem:successful_step_bound1}
If $k$ is a successful iteration then we have:
\begin{align*}
\|x_{k} - x^*\|^2 - \|x_{k-1} - x^* \|^2 \leq  - 2\alpha_k (F(x_k) - F(x^*) )  + \lambda_k \cdot \|x_{k-1} -x^*\|,
\end{align*}
where $\lambda_k = \| 2\alpha_k (\nabla f(x_{k-1})-g_k) \| $.
\end{lemma}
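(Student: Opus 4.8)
The plan is to reproduce the classical ISTA descent estimate (essentially bound (3.6) of \cite{Beck-Teboulle}) while explicitly carrying the gradient-error term, in the spirit of the inexact analysis of \cite{Schmidt-Roux-Bach}. Throughout I would write $y_k = x_{k-1}$, $p := x_k = \tilde{p}_{\alpha_k}(y_k)$, and introduce the gradient error $e_k := \nabla f(y_k) - g_k$, so that $\lambda_k = \norm{2\alpha_k e_k} = 2\alpha_k\norm{e_k}$ since $\alpha_k>0$. The only place where I use that iteration $k$ is \emph{successful} is to invoke the sufficient decrease condition checked in Algorithm~\ref{alg:ista_bktr_line_search}, namely $F(x_k) \le \tilde{Q}_{\alpha_k}(x_k,y_k) = f(y_k) + g_k^\top(x_k - y_k) + \frac{1}{2\alpha_k}\norm{x_k-y_k}^2 + h(x_k)$.

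Next I would extract a subgradient from the prox step. Since $x_k$ minimizes $\tilde{Q}_{\alpha_k}(\cdot,y_k)$, first-order optimality produces $\zeta \in \partial h(x_k)$ with $\zeta = -g_k - \frac{1}{\alpha_k}(x_k - y_k)$. Combining the convexity inequality $f(x^*) \ge f(y_k) + \nabla f(y_k)^\top(x^* - y_k)$ with the subgradient inequality $h(x^*) \ge h(x_k) + \zeta^\top(x^* - x_k)$ gives a lower bound on $F(x^*)$. Subtracting this bound from the sufficient-decrease upper bound on $F(x_k)$ cancels the $f(y_k)$ and $h(x_k)$ terms, and after regrouping the inner products involving $g_k$ and $\nabla f(y_k)$ the linear part collapses to $-e_k^\top(x^* - y_k)$, leaving $F(x_k) - F(x^*) \le -e_k^\top(x^* - y_k) + \frac{1}{2\alpha_k}\norm{x_k - y_k}^2 + \frac{1}{\alpha_k}(x_k - y_k)^\top(x^* - x_k)$.

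The key algebraic step, which I expect to be the main (if routine) obstacle, is turning the remaining quadratic terms into a telescoping difference of squared distances. Writing $u := x_k - y_k$ and using $x^* - x_k = (x^* - y_k) - u$ together with the identity $\norm{x^* - x_k}^2 = \norm{x^* - y_k}^2 - 2u^\top(x^* - y_k) + \norm{u}^2$, the quadratic contribution $\frac{1}{2\alpha_k}\norm{u}^2 + \frac{1}{\alpha_k}u^\top(x^* - x_k)$ simplifies to exactly $\frac{1}{2\alpha_k}\big(\norm{x_{k-1} - x^*}^2 - \norm{x_k - x^*}^2\big)$. This yields $F(x_k) - F(x^*) \le -e_k^\top(x^* - y_k) + \frac{1}{2\alpha_k}\big(\norm{x_{k-1}-x^*}^2 - \norm{x_k-x^*}^2\big)$.

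Finally I would multiply through by $2\alpha_k>0$, isolate the squared-distance difference on the left, and bound the error term by Cauchy--Schwarz, using $-2\alpha_k e_k^\top(x^* - x_{k-1}) = 2\alpha_k e_k^\top(x_{k-1} - x^*) \le 2\alpha_k\norm{e_k}\,\norm{x_{k-1}-x^*} = \lambda_k\norm{x_{k-1}-x^*}$, which gives precisely the claimed inequality. Note that none of the oracle properties (truth of the iteration or the constant $\constG$) enter here; only convexity of $f$ and $h$, the prox optimality condition, and the successful-step inequality are used, so the statement holds for \emph{every} successful realization regardless of whether iteration $k$ is true.
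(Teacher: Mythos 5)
Your proof is correct and follows essentially the same route as the paper's: sufficient decrease plus convexity of $f$ at $x_{k-1}$ and the subgradient inequality for $h$ at $x_k$, the prox optimality condition $x_{k-1}-x_k=\alpha_k(\zeta+g_k)$, the three-point algebraic identity producing the telescoping squared distances, and Cauchy--Schwarz on the error term. The only cosmetic difference is that you substitute $x=x^*$ from the outset, whereas the paper carries a general comparison point $x$ and specializes at the end; the manipulations are otherwise identical.
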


\begin{proof}
Since iteration $k$ is successful,  the sufficient decrease condition holds: 
\begin{align*}
    F(\tilde{p}_{\alpha_k} (x_{k-1})) &\leq \tilde{Q}_{\alpha_k}(\tilde{p}_{\alpha_k} (x_{k-1}), x_{k-1}) \nonumber\\
    &= f(x_{k-1}) +(x_k -x_{k-1})^\top g_k + \frac{1}{2\alpha_k}\|x_k-x_{k-1}\|^2 + h(x_k).
\end{align*}
By the  convexity of $f$ and $h$, applied at $x_{k-1}$ and $x_k$ respectively, for any $x$,  
 \begin{align*}
f(x) &\geq f(x_{k-1}) + \nabla f(x_{k-1})^\top (x-x_{k-1}),
\nonumber\\
h(x) &\geq h(x_k) + \gamma(x_{k-1})^\top (x-x_k),
\end{align*}
where the second bound holds for every subgradient $\gamma(x_{k-1})$ of $h$ at $x_{k}$. 
Summing those inequality we have 
\begin{align*}
f(x) +h(x) \geq f(x_{k-1}) + \nabla f(x_{k-1})^\top (x-x_{k-1})+ h(x_k) + \gamma(x_{k-1})^\top (x-x_k).
\end{align*}
Combining this with the sufficient decrease condition we get: 
\begin{align*}
    &F(x) - F(x_k) \nonumber\\&\geq \nabla f(x_{k-1})^\top (x-x_{k-1}) + \gamma(x_{k-1})^\top (x-x_k) - (x_k -x_{k-1})^\top g_k - \frac{1}{2\alpha_k}\|x_k-x_{k-1}\|^2\nonumber\\
    &= (\nabla f(x_{k-1})-g_k) ^\top (x-x_{k-1}) + (\gamma(x_{k-1})+g_k)^\top (x-x_k) - \frac{1}{2\alpha_k}\|x_k-x_{k-1}\|^2.
\end{align*}
Using the fact that $x_k = \tilde{p}_{\alpha_k} (x_{k-1}) =\arg\min_x \tilde{Q}_{\alpha_k}(x, x_{k-1})$, we get that $ x_{k-1} - x_k= \alpha_k[\gamma(x_{k-1})+g_k]$ for some subgradient $\gamma(x_{k-1})$ of $h$ at $x_{k}$. Therefore
\begin{align*}
    &F(x) - F(x_k) \nonumber\\
    &\geq (\nabla f(x_{k-1})-g_k) ^\top (x-x_{k-1}) + \frac{1}{\alpha_k}(x_{k-1} - x_k)^\top (x-x_k) - \frac{1}{2\alpha_k}\|x_k-x_{k-1}\|^2\nonumber\\
    &= (\nabla f(x_{k-1})-g_k) ^\top (x-x_{k-1}) + \frac{1}{\alpha_k}(x_{k-1} - x_k)^\top (x-x_{k-1} + x_{k-1} -x_k) \nonumber\\
    & \quad - \frac{1}{2\alpha_k}\|x_k-x_{k-1}\|^2\nonumber\\
    &= (\nabla f(x_{k-1})-g_k) ^\top (x-x_{k-1}) + \frac{1}{\alpha_k}(x_{k-1} - x_k)^\top (x-x_{k-1}) + \frac{1}{2\alpha_k}\|x_k-x_{k-1}\|^2\nonumber\\
    &= (\nabla f(x_{k-1})-g_k) ^\top (x-x_{k-1}) + \frac{1}{2\alpha_k} \left( \|x-x_k \|^2-\|x-x_{k-1} \|^2\right),
\end{align*}
and we get the bound for $x=x^*$: 
\begin{align*}
    &2\alpha_k (F(x_k) - F(x^*) ) - 2\alpha_k (\nabla f(x_{k-1})-g_k) ^\top (x_{k-1} -x^*)
    \leq \|x_{k-1} - x^*\|^2 - \|x_k - x^* \|^2.
\end{align*}
Rearranging the terms, we get the statement of Lemma \ref{lem:successful_step_bound1}:
\begin{align*}
    \|x_k - x^* \|^2 - \|x_{k-1} - x^*\|^2 &\leq -2\alpha_k (F(x_k) - F(x^*) ) + 2\alpha_k (\nabla f(x_{k-1})-g_k) ^\top (x_{k-1} -x^*)\nonumber\\
    &\leq -2\alpha_k (F(x_k) - F(x^*) ) + \lambda_k \cdot\|x_{k-1} -x^*\|,
\end{align*}
where $\lambda_k = \|2\alpha_k (\nabla f(x_{k-1})-g_k)\|$.

~\qed \end{proof}

The term $\|2\alpha_k (\nabla f(x_{k-1})-g_k)\| \cdot\|x_{k-1} -x^*\| $ in the above lemma bounds the worst case error in each iteration. In the next lemma we adapt the argument used in \cite{Schmidt-Roux-Bach} to show how these errors accumulate in the total bound on the optimality. 
\begin{lemma}\label{lem:after_K1}
    After $K$ iterations of the algorithm, the following bound holds: 
\begin{align*}
\sum_{k\in S, k \leq K} 2\alpha_k (F(x_k) - F(x^*) )
    &\leq 2\|x_0 - x^* \|^2 + \sum_{k=1}^K \lambda_k \cdot \left( \sum_{i=1}^{k-1} \lambda_i \right)+ 2\sum_{k=1}^K \lambda_k^2,
\end{align*}
where $S$ is the set of successful iterations.
\end{lemma}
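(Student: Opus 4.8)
The plan is to sum the single-step estimate of Lemma~\ref{lem:successful_step_bound1} over all iterations up to $K$ and telescope the distance-to-optimum terms. On an unsuccessful iteration $x_k=x_{k-1}$, so $\|x_k-x^*\|^2-\|x_{k-1}-x^*\|^2=0$ and such iterations contribute nothing to the left-hand side; on a successful iteration Lemma~\ref{lem:successful_step_bound1} gives $\|x_k-x^*\|^2-\|x_{k-1}-x^*\|^2\le -2\alpha_k(F(x_k)-F(x^*))+\lambda_k\|x_{k-1}-x^*\|$. Since $\lambda_k\|x_{k-1}-x^*\|\ge 0$, the same inequality (with the function-value term set to zero) holds trivially on unsuccessful iterations, so I can sum over $k=1,\dots,K$ uniformly. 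After telescoping and discarding the nonnegative terminal term $\|x_K-x^*\|^2$, this yields
\begin{align*}
\sum_{k\in S,\,k\le K} 2\alpha_k\big(F(x_k)-F(x^*)\big)\le \|x_0-x^*\|^2+\sum_{k=1}^K\lambda_k\,\|x_{k-1}-x^*\|.
\end{align*}
Everything then reduces to controlling the accumulated error $\sum_k\lambda_k\|x_{k-1}-x^*\|$.

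The crux — and the place where the argument of \cite{Schmidt-Roux-Bach} is adapted — is that the iterate distances $\|x_{k-1}-x^*\|$ can grow because of the gradient errors, so they must be bounded a priori. Dropping the nonpositive function-value term in Lemma~\ref{lem:successful_step_bound1} leaves $\|x_k-x^*\|^2\le \|x_{k-1}-x^*\|^2+\lambda_k\|x_{k-1}-x^*\|$ on successful steps (and equality of distances on unsuccessful ones). Completing the square on the right, $\|x_{k-1}-x^*\|^2+\lambda_k\|x_{k-1}-x^*\|\le \big(\|x_{k-1}-x^*\|+\tfrac12\lambda_k\big)^2$, and taking square roots gives the clean recursion $\|x_k-x^*\|\le \|x_{k-1}-x^*\|+\tfrac12\lambda_k$, valid for every $k$. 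Telescoping this yields the uniform bound $\|x_{k-1}-x^*\|\le \|x_0-x^*\|+\tfrac12\sum_{i=1}^{k-1}\lambda_i$. I expect this a priori control of the drift of the iterates to be the main obstacle: without it the right-hand side cannot be closed, and it is exactly the step where the inexact/stochastic nature of the gradient enters through the cumulative error $\sum_i\lambda_i$.

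Finally I would substitute this bound into $\sum_k\lambda_k\|x_{k-1}-x^*\|$, obtaining $\|x_0-x^*\|\sum_k\lambda_k+\tfrac12\sum_k\lambda_k\sum_{i=1}^{k-1}\lambda_i$. The double sum is essentially the middle term of the claim; the term $\|x_0-x^*\|\sum_k\lambda_k$ is handled by Young's inequality $\|x_0-x^*\|\sum_k\lambda_k\le \|x_0-x^*\|^2+\tfrac14\big(\sum_k\lambda_k\big)^2$, and expanding $\big(\sum_k\lambda_k\big)^2=\sum_k\lambda_k^2+2\sum_k\lambda_k\sum_{i<k}\lambda_i$ produces only a further multiple of the cross term together with a multiple of $\sum_k\lambda_k^2$. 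Collecting these contributions with the leading $\|x_0-x^*\|^2$ gives the stated bound $2\|x_0-x^*\|^2+\sum_k\lambda_k\sum_{i<k}\lambda_i+2\sum_k\lambda_k^2$; the constants $2$ are mild slack from the Young step rather than anything essential. All manipulations are elementary once the drift bound of the previous paragraph is in hand.
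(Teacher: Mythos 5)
Your proof is correct, and in fact it proves slightly more than the lemma asks: you end up with $\tfrac14\sum_k\lambda_k^2$ where the statement allows $2\sum_k\lambda_k^2$. The overall skeleton is the same as the paper's, both being adaptations of \cite{Schmidt-Roux-Bach}: telescope Lemma \ref{lem:successful_step_bound1} over all iterations (unsuccessful ones contribute nothing since $x_k=x_{k-1}$), obtaining
\begin{align*}
\sum_{k\in S,\,k\le K}2\alpha_k\big(F(x_k)-F(x^*)\big)\le \|x_0-x^*\|^2+\sum_{k=1}^K\lambda_k\,\|x_{k-1}-x^*\|,
\end{align*}
then control the drift $u_k=\|x_k-x^*\|$ a priori in terms of the accumulated noise, and substitute back. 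The execution of the two technical steps is genuinely different, however. For the drift, the paper assembles the accumulated quadratic inequality $u_K^2\le Z+\sum_k\lambda_{k+1}u_k$ with $Z=u_0^2+\lambda_1u_0$ and invokes Lemma 1 of \cite{Schmidt-Roux-Bach} to conclude $u_k\le \sum_{i=1}^{k+1}\lambda_i+u_0$; you instead complete the square on each single step to get the recursion $u_k\le u_{k-1}+\tfrac12\lambda_k$ and telescope. Your route is more elementary (no external lemma), exploits the per-step inequality that is available here anyway, and is tighter by a factor of two. For the final bookkeeping, the paper absorbs the term $\|x_0-x^*\|\sum_k\lambda_k$ via the bound $\|x_0-x^*\|\sum_{k=1}^K\lambda_k\le \|x_0-x^*\|^2+\sum_{k=1}^K\lambda_k^2$, which is not valid in general: with $\|x_0-x^*\|=1$ and $\lambda_k=\tfrac12$ for $k=1,\dots,10$, the left side equals $5$ while the right side equals $3.5$. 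Your replacement, Young's inequality $ab\le a^2+\tfrac14 b^2$ followed by the expansion $\big(\sum_k\lambda_k\big)^2=\sum_k\lambda_k^2+2\sum_k\lambda_k\sum_{i<k}\lambda_i$, is airtight, because the extra cross term it generates is exactly of the form the statement tolerates. So your argument not only establishes the lemma but also repairs the one step in the paper's own derivation that does not hold as written.
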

\begin{proof}
For every $K$, from Lemma \ref{lem:successful_step_bound1} we have
\begin{align*}
    \|x_K - x^* \|^2 - \|x_0 - x^* \|^2 
    &= \sum_{k=1}^{K} \left(\|x_{k} - x^* \|^2 -\|x_{k-1} - x^* \|^2 \right) \nonumber\\
    &\leq \sum_{k\in S, k \leq K} \left[\lambda_k \cdot \|x_{k-1} -x^*\| - 2\alpha_k (F(x_k) - F(x^*) ) \right],
\end{align*}
where $S$ is the set of successful iterations. 
Hence we arrive at our key bound
\begin{align}\label{eq:thm_key1}
    &\|x_K - x^* \|^2 - \|x_0 - x^* \|^2 + \sum_{k\in S, k \leq K} 2\alpha_k (F(x_k) - F(x^*) )\leq  \sum_{k=1}^K \lambda_k  \cdot \| x_{k-1} -x^* \|.
\end{align}
Now we follow a similar argument as \cite{Schmidt-Roux-Bach} with two steps. First, we use the key bound \eqref{eq:thm_key1} to bound the distance to optimal point $u_{k-1} = \| x_{k-1} -x^* \|$ in terms of the initial distance $u_{0} = \| x_{0} -x^* \|$ and the noise terms $\lambda_k$. Then we use this bound on $\| x_{k-1} -x^* \|$ to substitute into the last sum of the key bound \eqref{eq:thm_key1} and obtain the final derivation in terms of the noise $\lambda_k$.\\
Dropping the positive term $\sum_{k\in S, k \leq K} 2\alpha_k (F(x_k) - F(x^*) )$ we get
\begin{align*}
    \|x_K - x^* \|^2 
    &\leq \|x_0 - x^* \|^2 + \sum_{k=1}^K \lambda_k \cdot \| x_{k-1} -x^* \| \nonumber\\
    &= \|x_0 - x^* \|^2 + \lambda_1 \cdot \| x_{0} -x^* \| + \sum_{k=1}^K \lambda_{k+1} \cdot\| x_{k} -x^* \|.
\end{align*}
Letting $u_k = \| x_{k} -x^* \|$, this is equivalent to 
\begin{align*}
    u_K^2 = \|x_K - x^* \|^2  \leq Z + \sum_{k=1}^K\lambda_{k+1} u_k.
\end{align*}
where $Z = \|x_0 - x^* \|^2 + \lambda_1 \cdot \| x_{0} -x^* \| = u_0^2 + \lambda_1 u_0$. 
Using this inequality and applying Lemma 1 in \cite{Schmidt-Roux-Bach} for the non-negative sequence $\{u_K\}$ have
\begin{align*}
     u_{k} \leq \frac{1}{2} \sum_{i=1}^{k} \lambda_{i+1} + \left(Z + \left(\frac{1}{2} \sum_{i=1}^{k} \lambda_{i+1} \right)^2\right)^{1/2}
    \leq  \sum_{i=1}^{k} \lambda_{i+1} + Z^{1/2}  \leq  \sum_{i=1}^{k+1} \lambda_{i} + u_0 ,
\end{align*}
where the last line follows from the fact that $Z^{1/2} = \left(u_0^2 + \lambda_1  \cdot u_0 \right)^{1/2} \leq u_0 + \lambda_1.$
\\
Putting the bound for $u_k$ into the key bound \eqref{eq:thm_key1} we have
\allowdisplaybreaks
\begin{align*}
    &\|x_K - x^* \|^2 - \|x_0 - x^* \|^2 + \sum_{k\in S, k \leq K} 2\alpha_k (F(x_k) - F(x^*) )\nonumber\\
    &\leq \sum_{k=1}^K \lambda_k \cdot \| x_{k-1} -x^* \|\leq \sum_{k=1}^K \lambda_k \cdot \left( \sum_{i=1}^{k} \lambda_i + \|x_0 - x^* \|\right)\nonumber\\
    &= \sum_{k=1}^K \lambda_k \cdot \left( \sum_{i=1}^{k} \lambda_i \right) + \|x_0 - x^* \| \cdot \sum_{k=1}^K \lambda_k\nonumber\\
    &\leq  \sum_{k=1}^K \lambda_k \cdot \left( \sum_{i=1}^{k-1} \lambda_i \right)+ \sum_{k=1}^K \lambda_k^2 + \|x_0 - x^* \|^2 + \sum_{k=1}^K \lambda_k^2 \nonumber\\
    &= \|x_0 - x^* \|^2 + \sum_{k=1}^K \lambda_k \cdot \left( \sum_{i=1}^{k-1} \lambda_i \right)+ 2\sum_{k=1}^K \lambda_k^2,
\end{align*}
and the statement of Lemma \ref{lem:after_K1} follows. 
~\qed
\end{proof}

In \cite{Schmidt-Roux-Bach} the error term is assumed to be deterministically bounded in such a way that 
the total accumulation is bounded by a constant (not dependent on the total number of iterations). Here we can relax this assumption somewhat, since we only need the expected error terms to be summable. Therefore we make the following additional assumption on the output of oracle SFO, which can be seen as the relaxation of assumptions in  \cite{Schmidt-Roux-Bach}. 
\begin{assumption}\label{assumpt:ista_noise}
At every iteration $k$,  $g_k=g(y_k,\xi_k)$ computed using the stochastic first-order oracle SFO$_{\constG, p}(y_k,\alpha_k)$ satisfies  
\begin{align}\label{eq:ista_noise}
    &\EE_{\xi_k} \left[ \| (\nabla f(y_{k} )- G_{k} )\|^2 | \pastone\right] \leq \frac{1}{\alpha_{k}^2\cdot k^{2+ \beta}},
\end{align}
for some fixed $\beta>0$. 
\end{assumption}

Note that essentially this assumption requires the error of the gradient estimate to decay a little bit faster than $\frac{1}{k}$. If the step-size parameter happens to be small, then more gradient error can be tolerated. We discuss this assumption in more detail at the end of the section. 

\begin{theorem}[Upper bound of the number of good steps]\label{th:bound_expectation_N_TS1} Under Assumption \ref{assumpt:convex_smooth}, \ref{assumpt:bounded_below}  ,
and  Assumption \ref{assumpt:ista_noise}, we have 
 \begin{align*}
\EE [N_G] 
\leq \frac{1}{2\stepsizebar\epsilon} \left [2\|x_0 - x^* \|^2 + \frac{4(\beta + 2)^2}{\beta^2} + \frac{8(\beta+2)}{\beta+1} 
 \right]. 
\end{align*}

\end{theorem}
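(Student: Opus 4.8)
The plan is to turn the pathwise optimality estimate of Lemma~\ref{lem:after_K1} into a bound on $\EE[N_G]$ by lower-bounding its left-hand side through the good iterations and upper-bounding the two noise sums in expectation. Since $F^*$ is the global minimum, every summand $2\alpha_k(F(x_k)-F^*)$ on the left of Lemma~\ref{lem:after_K1} is nonnegative, so I may discard all successful terms except the good ones. A good iteration $k$ (true, successful, with $\Alpha_k\ge\stepsizebar$) occurring before the stopping time satisfies $\alpha_k\ge\stepsizebar$ and $F(x_k)-F^*>\epsilon$, hence $2\alpha_k(F(x_k)-F^*)\ge 2\stepsizebar\epsilon$. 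Evaluating Lemma~\ref{lem:after_K1} pathwise at the random horizon $K=N_\epsilon-1$ therefore gives
\[
2\stepsizebar\epsilon\, N_G \;\le\; 2\|x_0-x^*\|^2 + \sum_{k=1}^{N_\epsilon-1}\lambda_k\Big(\sum_{i=1}^{k-1}\lambda_i\Big) + 2\sum_{k=1}^{N_\epsilon-1}\lambda_k^2 .
\]

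To remove the random number of terms I would use nonnegativity of the $\lambda_k$ to replace the upper limit $N_\epsilon-1$ by $\infty$ in both sums, producing a majorant whose index range is deterministic, and then take expectations. The crucial input is the moment bound coming from Assumption~\ref{assumpt:ista_noise}. Recalling $y_k=x_{k-1}$ and $\lambda_k=2\alpha_k\|\nabla f(x_{k-1})-g_k\|$, and that $\Alpha_k$ is $\pastone$-measurable, the factor $\alpha_k^2$ cancels, giving $\EE[\lambda_k^2\mid\pastone]\le 4\alpha_k^2\cdot\frac{1}{\alpha_k^2 k^{2+\beta}}=\frac{4}{k^{2+\beta}}$, and by Jensen's inequality $\EE[\lambda_k\mid\pastone]\le\sqrt{\EE[\lambda_k^2\mid\pastone]}\le\frac{2}{k^{1+\beta/2}}$. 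For the quadratic sum this yields $2\,\EE\!\big[\sum_k\lambda_k^2\big]\le 8\sum_{k\ge1}k^{-(2+\beta)}$, and the integral comparison $\sum_{k\ge1}k^{-(2+\beta)}\le 1+\frac{1}{\beta+1}=\frac{\beta+2}{\beta+1}$ reproduces the term $\frac{8(\beta+2)}{\beta+1}$.

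The cross term is the main obstacle, since the products $\lambda_k\lambda_i$ are correlated across iterations and cannot be bounded term-by-term without using the probabilistic structure. Here I would exploit the filtration: for $i<k$ the factor $\lambda_i$ is $\pastone$-measurable, so the tower property gives $\EE[\lambda_k\lambda_i]=\EE\big[\lambda_i\,\EE[\lambda_k\mid\pastone]\big]\le\frac{2}{k^{1+\beta/2}}\,\EE[\lambda_i]\le\frac{4}{k^{1+\beta/2}i^{1+\beta/2}}$, where $\EE[\lambda_i]\le\frac{2}{i^{1+\beta/2}}$ follows from the conditional bound above. Majorizing the triangular double sum by the full product of single sums, $\sum_{k}\sum_{i<k}\EE[\lambda_k\lambda_i]\le\big(\sum_{k\ge1}\tfrac{2}{k^{1+\beta/2}}\big)^2$, together with the integral comparison $\sum_{k\ge1}k^{-(1+\beta/2)}\le 1+\frac{2}{\beta}=\frac{\beta+2}{\beta}$, yields $\frac{4(\beta+2)^2}{\beta^2}$. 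Collecting the three pieces inside the brackets and dividing by $2\stepsizebar\epsilon$ produces exactly the claimed bound. The points requiring care are the measurability bookkeeping (that $\alpha_k$ is $\pastone$-measurable so the $\alpha_k^2$ factors cancel, and that $\lambda_i$ for $i<k$ is $\pastone$-measurable) and the justification of passing the expectation inside the infinite sums, which is legitimate by the monotone convergence theorem since all terms are nonnegative.
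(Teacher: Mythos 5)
Your proposal is correct and follows essentially the same route as the paper: the same pathwise application of Lemma~\ref{lem:after_K1} at $K=N_\epsilon-1$, the same lower bound $2\stepsizebar\epsilon\,N_G$ obtained by restricting to good iterations (each of which has $\Alpha_k\ge\stepsizebar$ and $F(X_k)-F^*>\epsilon$ before the stopping time), the same conditional moment bounds $\EE[\lambda_k^2\mid\pastone]\le 4/k^{2+\beta}$ and $\EE[\lambda_k\mid\pastone]\le 2/k^{1+\beta/2}$ exploiting that $\Alpha_k$ is $\pastone$-measurable, the same tower-property treatment of the cross terms $\EE[\lambda_k\lambda_i]$ using that $\lambda_i\in\pastone$ for $i<k$, and the same integral comparisons, producing identical constants $\frac{4(\beta+2)^2}{\beta^2}$ and $\frac{8(\beta+2)}{\beta+1}$. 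The one place where you genuinely deviate is the removal of the random summation limit: the paper invokes a general Wald identity for the hitting time $N_\epsilon$ to convert $\EE\bigl[\sum_{k=1}^{N_\epsilon}\Lambda_k^2\bigr]$ and the cross-term sum into sums of expectations, whereas you majorize both sums pathwise by their infinite-horizon versions (legitimate since all terms are nonnegative) and then exchange expectation and summation by monotone convergence/Tonelli. Your device is more elementary and arguably cleaner, since it sidesteps any integrability or adaptedness conditions one would need to verify for a Wald-type identity, and it reaches exactly the same bound; the paper's phrasing buys nothing extra here beyond keeping the stopping time explicit in the intermediate displays.
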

\begin{proof}
We recall the definition of the noise term $\lambda_k =  \|2\alpha_{k} (\nabla f(y_{k} )- g_{k} )\| $. Let $\Lambda_k =  \|2\Alpha_{k} (\nabla f(Y_{k} )- G_{k} )\| $, thus $\Lambda_k$ is a random variable with realizations  $\lambda_k$. 
Since the statement of Lemma \ref{lem:after_K1} holds for every realizations of Algorithm \ref{alg:ista_bktr_line_search}, for $K = N_\epsilon -1$ we have:  
\begin{align*}
\sum_{k\in S, k \leq N_\epsilon -1} 2\Alpha_k (F(X_k) - F(x^*) )
    &\leq 2\|x_0 - x^* \|^2 + \sum_{k=1}^{N_\epsilon -1} \Lambda_k \cdot \left( \sum_{i=1}^{k-1} \Lambda_i \right)+ 2\sum_{k=1}^{N_\epsilon -1} \Lambda_k^2,
\end{align*}
where $S$ is the set of successful iterations.
Recall that the stopping time $N_\epsilon = \min\{k = 1,2, \ldots\,:  F(X_k) - F^*  \leq \epsilon\}$. Hence for every $k \leq N_\epsilon-1$, we have that $ F(X_k) - F^*> \epsilon$ and
\begin{align*}
    \epsilon \cdot \sum_{k\in S, k \leq N_\epsilon -1} 2\Alpha_k  &<  \sum_{k\in S, k \leq N_\epsilon -1} 2\Alpha_k (F(X_k) - F(x^*) )\nonumber\\
    &\leq 2\|x_0 - x^* \|^2 + \sum_{k=1}^{N_\epsilon} \Lambda_k \cdot \left( \sum_{i=1}^{k-1} \Lambda_i \right)+ 2\sum_{k=1}^{N_\epsilon} \Lambda_k^2.
\end{align*}
Let $\mathcal{N}_G$ be the set of \textit{good} iterations  i.e. true and successful iterations $k$ with $\alpha_k\geq \stepsizebar, k \leq N_\epsilon -1 $, then $N_G$ is the cardinality of that set. We have 
\begin{align*} 
\sum_{k\in S, k \leq N_\epsilon -1} 2\Alpha_k \geq\sum_{k\in \mathcal{N}_G, k \le N_\epsilon -1 }  2\Alpha_k \geq N_G \cdot 2\stepsizebar = 2 N_G \stepsizebar.
\end{align*}
Therefore 
\begin{align*}
    2 N_G \stepsizebar \cdot \epsilon  
    &\leq 2\|x_0 - x^* \|^2 + \sum_{k=1}^{N_\epsilon} \Lambda_k \cdot \left( \sum_{i=1}^{k-1} \Lambda_i \right)+ 2\sum_{k=1}^{N_\epsilon} \Lambda_k^2,
\end{align*}
and 
\begin{align*}
    N_G   
    &\leq \frac{2\|x_0 - x^* \|^2 + \sum_{k=1}^{N_\epsilon} \Lambda_k \cdot \left( \sum_{i=1}^{k-1} \Lambda_i \right)+ 2\sum_{k=1}^{N_\epsilon} \Lambda_k^2}{2 \stepsizebar \epsilon}.
\end{align*}
Taking expectation we have 
\begin{align}\label{eq:bound1}
\EE [N_G ] 
 &\leq \frac{1}{2\stepsizebar\epsilon} \left [2\|x_0 - x^* \|^2 
+ \EE \left[ \sum_{k=1}^{N_\epsilon} \Lambda_{k} \cdot \left(\sum_{i=1}^{k-1} \Lambda_{i} \right)  \right] + 2\EE \left[\sum_{k=1}^{N_\epsilon}  \Lambda_{k} ^2  \right] 
 \right]  .
\end{align}
In order to bound \eqref{eq:bound1}, we derive the expected values of $\Lambda_k^2$, $\Lambda_k$ and $\Lambda_k \Lambda_i$ for $1\leq i, k \leq N_\epsilon$
using Assumption \ref{assumpt:ista_noise}. From \eqref{eq:ista_noise} we have: 
\begin{align}\label{eq:mu_k_squared1}
    \EE \left[\Lambda_{k}^2| \pastone\right] = \EE \left[4\alpha_{k}^2 \| (\nabla f(y_{k} )- G_{k} )\|^2 | \pastone\right] \leq \frac{4}{k^{2+ \beta}},
\end{align}
which leads to 
\begin{align*}
    \left(\EE \left[\Lambda_{k}| \pastone\right]\right)^2 \leq \EE \left[\Lambda_{k}^2| \pastone\right] \leq \frac{4}{k^{2+ \beta}}.
\end{align*}
Therefore we have the bound for $\EE \left[\Lambda_{k}| \pastone\right]$: 
\begin{align}\label{eq:mu_k1}
    \EE \left[\Lambda_{k}| \pastone\right] \leq \frac{2}{k^{1+ 0.5\beta}}.
\end{align} 
Taking the total expectation of \eqref{eq:mu_k_squared1} and \eqref{eq:mu_k1} we have 
\begin{align*}
    \EE \left[\Lambda_{k}^2\right] \leq \frac{4}{k^{2+ \beta}} \text{ and } \EE \left[\Lambda_{k}\right] \leq \frac{2}{k^{1+ 0.5\beta}}.
\end{align*} 
Now we consider the quantity $\EE \left[\Lambda_{k}\Lambda_{i}| \pastone\right] $ where $i< k $. 
From the definition of  the filtration $\pastone$ and $\Lambda_{i}$,  {we have that} $\Lambda_i \in \pastone$ for all $i < k$. Therefore by the rules of conditional probability
\begin{align*}
    \EE \left[\Lambda_{k}\Lambda_{i}| \pastone\right] =  \EE \left[\Lambda_{k}| \pastone\right] \Lambda_i .
\end{align*}
Hence we can bound the expect value of $\Lambda_{k}\Lambda_{i}$ as follows: 
\begin{align*}
    \EE \left[\Lambda_{k}\Lambda_{i}  \right] 
    &=\EE \left[\EE \left[\Lambda_{k}\Lambda_{i}| \pastone \right] \right]  =\EE \left[\EE \left[\Lambda_{k}| \pastone\right] \Lambda_i  \right] 
    \nonumber\\ \nonumber
    &  {\leq} \EE \left[\EE \left[\frac{2}{k^{1+ 0.5\beta}} \Lambda_i  \right]  \right]   = \frac{2}{k^{1+ 0.5\beta}} \EE \left[\EE \left[ \Lambda_i | \pastonei \right]  \right]
    \nonumber\\ \nonumber
    &  {\leq} \frac{2}{k^{1+ 0.5\beta}} \frac{2}{i^{1+ 0.5\beta}} .
\end{align*}
Since $N_\epsilon$ is a hitting time and the random variables $\Lambda_k^2$, $\Lambda_{k}\Lambda_{i}$ have finite means and are summable (over $\{k = 1,2, \ldots\}$ and over $\{i, k = 1,2,\ldots\}$ respectively), from the general Wald's identity  {\cite{Wald}} we have: 
\begin{align*}
\EE \left[\sum_{k=1}^{N_\epsilon} \Lambda_{k} ^2  \right] &= \EE \left(\sum_{k=1}^{N_\epsilon}\EE \left[  \Lambda_{k} ^2  \right]  \right)\nonumber\\
\EE \left[ \sum_{k=1}^{N_\epsilon}\Lambda_{k} \cdot \left(\sum_{i=1}^{k-1} \Lambda_{i} \right)  \right] &= \EE \left(\sum_{k=1}^{N_\epsilon}\sum_{i=1}^{k-1}\EE \left[  \Lambda_{k} \Lambda_{i}  \right] \right) .
\end{align*}
Applying the previous total expected value bounds in these inequality we have: 
 \begin{align*}
\EE \left[\sum_{k=1}^{N_\epsilon} \Lambda_{k} ^2  \right] & = \EE \left(\sum_{k=1}^{N_\epsilon}\EE \left[  \Lambda_{k} ^2  \right]  \right)
\leq 4\EE \left(\sum_{k=1}^{N_\epsilon}\frac{1}{k^{2+ \beta}}\right) \leq  \frac{4(\beta +2)}{\beta+1}\nonumber\\
 \EE \left[ \sum_{k=1}^{N_\epsilon}\Lambda_{k} \left(\sum_{i=1}^{k-1} \Lambda_{i} \right)  \right] &= \EE \left(\sum_{k=1}^{N_\epsilon}\sum_{i=1}^{k-1}\EE \left[  \Lambda_{k} \Lambda_{i}  \right] \right) \leq 4\EE \left(\sum_{k=1}^{N_\epsilon}\frac{1}{k^{1+ 0.5\beta}} \sum_{i=1}^{k-1} \frac{1}{i^{1+ 0.5\beta}}\right) \leq \frac{4(\beta + 2)}{\beta},
\end{align*}
where the last computations follows from 
\begin{align*}
    &\sum_{k=1}^{N_\epsilon}\frac{1}{k^{2+ \beta}} < 1 + \sum_{k=2}^{\infty} \frac{1}{k^{2+ \beta}} < 1+  \int_{1}^{\infty} \frac{1}{x^{2+ \beta}} dx =  \frac{\beta +2}{\beta+1},\nonumber\\
    &\sum_{k=1}^{N_\epsilon}\frac{1}{k^{1+ 0.5\beta}} < 1 + \sum_{k=2}^{\infty} \frac{1}{k^{1+ 0.5\beta}} < 1+  \int_{1}^{\infty} \frac{1}{x^{1+ 0.5\beta}} dx = \frac{\beta + 2}{\beta}.
\end{align*}
Substituting these bounds into the previous inequality \eqref{eq:bound1}, we have
\begin{align*}
\EE [N_G ] 
 &\leq \frac{1}{2\stepsizebar\epsilon} \left [2\|x_0 - x^* \|^2 + \frac{4(\beta + 2)^2}{\beta^2} + \frac{8(\beta+2)}{\beta+1} 
 \right]. 
\end{align*}
which is the statement of Theorem \ref{th:bound_expectation_N_TS1}.
~\qed \end{proof}

The final complexity result is a straightforward corollary of Theorem \ref{th:framework} and \ref{th:bound_expectation_N_TS1}. 
\begin{theorem}\label{th:final_results_ista}
Under Assumptions \ref{assumpt:convex_smooth}, \ref{assumpt:bounded_below}, \ref{assumpt:key} and Assumption \ref{assumpt:ista_noise} the number of iterations $N_\epsilon$ to reach an $\epsilon$ accuracy solution satisfies the following bound: 
\begin{align*}
\EE (N_\epsilon) \leq \frac{2p}{(2p-1)^2}\left (\frac{1}{\stepsizebar\epsilon} \left [2\|x_0 - x^* \|^2 + \frac{4(\beta + 2)^2}{\beta^2} + \frac{8(\beta+2)}{\beta+1} 
 \right]+ \log_\gamma\left(\frac{\stepsizebar}{\alpha_1}\right)\right)+1,
\end{align*}
where  $\stepsizebar $ is defined in Lemma \ref{lem:good_est_good_model_successful}.
\end{theorem}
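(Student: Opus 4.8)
The plan is to obtain this result as an immediate consequence of the two preceding theorems, which together decompose the expected hitting time into a ``framework'' bound and a bound on the expected number of good iterations. The statement invokes exactly the union of the hypotheses required by Theorem \ref{th:framework} (namely Assumption \ref{assumpt:key}) and by Theorem \ref{th:bound_expectation_N_TS1} (namely Assumptions \ref{assumpt:convex_smooth}, \ref{assumpt:bounded_below}, and \ref{assumpt:ista_noise}), so both intermediate results apply directly, with no additional verification needed.

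First I would recall the framework bound of Theorem \ref{th:framework},
\[
\EE(N_\epsilon-1) \leq \frac{2p}{(2p-1)^2}\left(2\EE(N_G) + \log_\gamma\left(\frac{\stepsizebar}{\alpha_1}\right)\right),
\]
which expresses the expected number of iterations entirely in terms of the expected number of good iterations $\EE(N_G)$ together with the fixed logarithmic term arising from the step-size dynamics. I would then substitute the explicit estimate for $\EE(N_G)$ supplied by Theorem \ref{th:bound_expectation_N_TS1}. Doubling that estimate exactly cancels the leading factor $\tfrac{1}{2}$ in front of $\tfrac{1}{\stepsizebar\epsilon}$, so that
\[
2\EE(N_G) \leq \frac{1}{\stepsizebar\epsilon}\left[2\|x_0 - x^* \|^2 + \frac{4(\beta + 2)^2}{\beta^2} + \frac{8(\beta+2)}{\beta+1}\right].
\]

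Inserting this into the framework bound yields the stated estimate for $\EE(N_\epsilon-1)$, and adding $1$ to both sides converts it into the claimed bound on $\EE(N_\epsilon)$.

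There is essentially no genuine obstacle here: the entire analytical difficulty has already been absorbed into the two cited theorems, and what remains is purely bookkeeping. The only points needing a little care are confirming that the factor $2$ multiplying $\EE(N_G)$ matches the leading $\tfrac12$ in the good-step bound so that the two cancel cleanly, keeping the $\log_\gamma(\stepsizebar/\alpha_1)$ term outside the $\tfrac{1}{\stepsizebar\epsilon}$ scaling since it does not depend on $\epsilon$, and correctly accounting for the shift from $N_\epsilon-1$ to $N_\epsilon$ through the additive constant.
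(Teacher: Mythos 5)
Your proposal is correct and follows exactly the paper's route: the paper itself presents Theorem \ref{th:final_results_ista} as a straightforward corollary of Theorem \ref{th:framework} and Theorem \ref{th:bound_expectation_N_TS1}, obtained by substituting the bound on $\EE(N_G)$ into the framework inequality and shifting from $\EE(N_\epsilon-1)$ to $\EE(N_\epsilon)$. Your bookkeeping (the factor $2$ cancelling the $\tfrac12$, the $\log_\gamma$ term staying outside the $\tfrac{1}{\stepsizebar\epsilon}$ scaling, and the additive $+1$) is all accurate.
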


  {
\paragraph{\bf Discussion of Condition \eqref{eq:ista_noise}.} 
Condition \eqref{eq:ista_noise} imposes the rate of decay of the variance of the gradient to be faster  than ${\cal O}(\frac{1}{k^2})$, where $k$ is the iteration index. In the case when $G_k$ is computed by sample averaging, this, in principle, implies 
samples size growing at the rate of ${\cal O}({k^2})$. Since our goal here is to derive expected iteration complexity of stochastic ISTA, to reach $\epsilon$ accuracy in terms of function value, to be ${\cal O}(\frac{1}{\epsilon})$, this would imply total sample complexity to be ${\cal O}(\frac{1}{\epsilon^3})$, which may not be optimal. This issue remains even in the smooth case.  Note that Assumption \ref{assumpt:ista_noise}
is in addition to  condition \eqref{eq:first_order1}. In   \cite{Berahas-Cao-Scheinberg}  condition  \eqref{eq:first_order1} (for the smooth case) alone is enough to to derive ${\cal O}(\frac{1}{\epsilon})$ expected complexity  for a step search algorithm applied to convex smooth functions. However, in   \cite{Berahas-Cao-Scheinberg}, in the analysis of the convex case, $\nabla \phi(x_k)$ can only be bounded below  by  
  ${\cal O}({\epsilon})$, thus to satisfy condition \eqref{eq:first_order1} one may need a sample size of ${\cal O}(\frac{1}{\epsilon^2})$, which would result in the same complexity. This complexity is not optimal for convex optimization, when gradient estimates are unbiased and then one seeks to bound expected convergence rate. However, the optimal complexity is unknown, to the best of our knowledge, for the cases of biased gradient estimates and for the bound on expected convergence rate.    
  }
  
  {  Analysis used in 
 \cite{Berahas-Cao-Scheinberg}  does not involve a telescopic sum argument as we do here. On the other hand an assumption that all iterates remain in a bounded region is imposed, while we do not make such an assumption. 
We derive relevant bounds  using \eqref{eq:ista_noise} and the  analysis as extended from \cite{Schmidt-Roux-Bach}. This analysis may not be tight in all cases, as it does not take into account a possible dependence of $\|x_k-x^*\|$ on $\epsilon$ (e.g. $\|x_k-x^*\|$ will decrease as ${\cal O}(\frac{1}{\sqrt{\epsilon}})$ if the function is locally strongly convex. But in general  this dependence can be very weak for general convex cases (for example consider $\phi(x)=x^{2m}$ for any natural number $m$). 
In summary, whether we attain an optimal complexity with our analysis remains unknown. 
The key objective of the paper is deriving analysis for stochastic   FISTA in the next session, where the telescopic sum argument is essential. Optimal complexity is also unknown for this setting.  Considering  strongly convex cases for both ISTA and FISTA in this setting, which may help with some of the issues raised here,  is the subject of future work.
}

  {Finally, there is an additional constant $\beta>0$ in Assumption \ref{assumpt:ista_noise} which also appears in the final expected complexity bound. The choice of this constant and whether or not its effect on the complexity is tight would depend on the specific properties of the first order oracle and its cost.}

\section{Stochastic FISTA Step Search Algorithm}\label{sec:fista}

\subsection{The Algorithm}\label{subsec:outline_algo}
We now extend the analysis of the previous section to an accelerated proximal method. The method that we study here  is a stochastic version
 of the  algorithm FISTA-BKTR in \cite{Scheinberg-Goldfarb-Bai}, which is, in turn, a full backtracking version of the FISTA Algorithm in \cite{Beck-Teboulle}. 
 In FISTA algorithm, the step-size  parameter $\alpha_k$ is chosen by backtracking using the same decrease condition as in ISTA. The key difference is that for the accelerated complexity bound to hold, the step-size  parameter may never increase, it can only decrease. This presents a major obstacle in developing  a stochastic version of this algorithm because, when gradient estimates are stochastic, they may not lead to function decrease even if $\alpha_k$ is arbitrarily small. Therefore a stochastic backtracking algorithm like FISTA will inevitably keep reducing $\alpha_k$ and loose convergence properties. One remedy against this problem is to have sufficient decrease condition based on the stochastic  approximation of $F(x)$ instead of the function itself as is proposed in  \cite{Vaswani-et-al} for smooth gradient method and  in  \cite{Franchini-Porta-Ruggiero-Trombini} for a proximal gradient algorithm (neither are accelerated). In each iteration  $f(x)$ is approximated by a stochastic function whose gradient is $g$ and whose value is used to measure sufficient decrease. The lower bound on $\alpha_k$ is then obtained as $1/L_{max}$ where $L_{max}$ is the largest Lipschitz constant of any gradient of any stochastic approximation function. This not only requires strong assumptions on individual stochastic approximations of $f(x)$, but also results in inferior convergence properties, since the measured  decrease is not the true decrease in the objective function \cite{Jin-Scheinberg-Xie}. 
 
Here we resolve the lower bound issue by choosing to work with a full backtracking version of FISTA that was proposed and analyzed in \cite{Scheinberg-Goldfarb-Bai} and  develop stochastic variant of this method. As in the case of stochastic ISTA analyzed in the previous section, $\alpha_k$ does not have to be bounded from below, but it tends to increase, once it becomes small enough.

We first present that deterministic algorithm and then show how it is modified for the stochastic first-order oracles. Compared to  FISTA  in \cite{Beck-Teboulle}, FISTA-BKTR in \cite{Scheinberg-Goldfarb-Bai} uses an additional  parameter $\theta_k$, which keeps record of the changes in  $\alpha_k$  and 
utilizes the following update  
\begin{align}\label{eq:fista_step}
   (t_{new}, y) = \text{FISTAStep}(x, x^{prev}, t, \theta).
\end{align}
where FISTAStep computes 
\begin{align*}
    t_{new} &=  \frac{1 + \sqrt{1+4\theta t^2}}{2}, \text{ and }
    y = x + \frac{t-1}{t_{new}} ( x - x^{prev} ).
\end{align*}

 \begin{algorithm}[ht]
\caption{FISTA-BKTR}
\label{alg:fista_bktr_orig}
\begin{algorithmic}[1]
\STATE {Initialization:} Choose an initial point $x_0\in \mathbb{R}^d$. Set $t_1 = 1, t_0 = 0$, $y_1=x_0=x_{-1}$  \nonumber\\
Set $0<\gamma<1$, $\theta_0 = 1$ with $\alpha^0_1>0$. 

\FOR{$k=1,2,\cdots,$}
    \STATE Set $\alpha_k=\alpha_k^0$\nonumber\\
     \STATE Compute the gradient $\nabla f(y_k)$ and the reference point $p_{\alpha_k} (y_k) :=\text{prox}_{\alpha_k h} (y_k- \alpha_k \nabla f(y_k) )$
    \STATE Check sufficient decrease condition $F(p_{\alpha_k} (y_k)) \leq Q_{\alpha_k}(p_{\alpha_k} (y_k), y_k)$
        \STATE \hspace{1.25em}If unsuccessful: 
             \STATE \hspace{2.5em}  Compute $(t_k, y_k) = \text{FISTAStep}(x_{k-1}, x_{k-2}, t_{k-1}, \theta_{k-1}) $. Set $\alpha_{k+1} = \gamma\alpha_k$ and $\theta_{k} = \tfrac{\theta_{k-1}}{\gamma}$.
            \STATE \hspace{2.5em}  Return to Step 5. 
        \STATE \hspace{1.25em}If successful: 
            \STATE \hspace{2.5em}Set $x_k=p_{\alpha_k} (y_k)$ and $t_k = t_k^{next}$,
    then choose $\alpha^0_{k+1}>0$ and $\theta_{k} = \tfrac{\alpha_k}{\alpha^0_{k+1}}$. \\
  \STATE \hspace{2.5em}$(t_{k+1}, y_{k+1}) = \text{FISTAStep}(x_{k},  {x_{k-1}}, t_{k}, \theta_{k}) $
\ENDFOR
\end{algorithmic}
\end{algorithm}

Algorithm \ref{alg:fista_bktr_orig} is analyzed in  \cite{Scheinberg-Goldfarb-Bai} using the key property in the analysis of FISTA that $ \alpha_{k-1} t_{k-1}^2\geq \alpha_k t_k(t_{k}-1)$ has to be maintained at each iteration, while also enabling $t_k$ to grow as a linear function of $k$.
In the case of the original FISTA algorithm this is simply enforced by maintaining
$ \alpha_{k-1} \geq \alpha_k $ and $t_{k-1}^2= t_k(t_{k}-1)$, while  FISTA-BKTR uses parameter $\theta_k$ to offset increases and decreases in
 $\alpha_{k}$ and maintain the key properties. 
 
 Here we will use the main FISTA-BKTR  scheme, however, as in the case of stochastic ISTA, the gradient estimate needs to be recomputed each time a step is attempted, even when the iteration is unsuccessful. Therefore the iteration counter gets increased on unsuccessful steps as well as successful ones and the algorithm notation needs to be updated accordingly. In particular 
we use  $x^{prev}_k$ (instead of $x_{k-1}$) to denote the iterate of the last successful iteration, that is the last iterate different from $x_k$. 
Similarly, $t_k^{next}$ denotes the value of $t_{k+1}$ that is assigned if iteration $k$ is successful, otherwise $t_{k+1}=t_k$. 
We also need to keep track of the $\alpha_k^{succ}$ - the step-size  parameter of the last successful iteration up to iteration $k$. 
On the other hand, $\alpha_k^0$ value is not used since the dynamics of the step-size  parameter are simplified - it is decreased by the factor of $\gamma$ on each unsuccessful iteration and increased proportionally on each successful one. 

The complexity analysis for the stochastic case necessarily has to count all successful and unsuccessful iterations, 
since the first-order oracle is called at each such iteration. For the deterministic FISTA-BKTR complexity analysis only includes the bound on successful iterations,
firstly because the first-order oracle is only needed when an iterate gets updated and because each successful iteration performs an inner loop whose length is deterministically bounded by ${\cal O}(\log_\gamma(\frac{1}{L}))$. 


\begin{algorithm}[ht]
\caption{Stochastic FISTA Step Search Algorithm}
\label{alg:fista_bktr_line_search}
\begin{algorithmic}[1]
\STATE {Initialization:} Choose an initial point $x_0\in \mathbb{R}^d$. Set $t_0 = 0$, let $x^{prev}_0 = x_0$. \nonumber\\
Set $0<\gamma<1$, $\theta_0 = \gamma$ with $\alpha_1>0$.  Let oracle SFO$_{\constG, p}$ be given with  {$0 \leq \constG \leq \frac{1}{3}$} and $\frac{1}{2} < p \leq 1 $.
\FOR{$k=1,2,\cdots,$}
    \STATE Compute $(t_k^{next}, y_k) = \text{FISTAStep}(x_{k-1}, x^{prev}_{k-1}, t_{k-1}, \theta_{k-1}) $
    \STATE Compute the gradient estimate $g_k=G(y_k,\xi_k)$ using the stochastic first-order oracle SFO$_{\constG, p}(y_k,\alpha_k)$.
    Compute the reference point $\tilde{p}_{\alpha_k} (y_k) =\text{prox}_{\alpha_k h} (y_k- \alpha_k g_k)$.
    \STATE Check sufficient decrease condition $F(\tilde{p}_{\alpha_k} (y_k)) \leq \tilde{Q}_{\alpha_k}(\tilde{p}_{\alpha_k} (y_k), y_k)$
        \STATE \hspace{1.25em}If unsuccessful: 
            \STATE \hspace{2.5em}Set $\left(x_{k},x^{prev}_{k},t_k\right) = \left(x_{k-1}, x^{prev}_{k-1}, t_{k-1}\right)$, then set $\alpha_{k+1} = \gamma\alpha_k$, 
            and $\theta_{k} = \tfrac{\theta_{k-1}}{\gamma}$.
        \STATE \hspace{1.25em}If successful: 
            \STATE \hspace{2.5em}Set $\left(x_k, x^{prev}_{k},t_k\right) = \left(\tilde{p}_{\alpha_k} (y_k), x_{k-1}, t_k^{next}\right)$,
    then set $\alpha_{k+1} = \tfrac{\alpha_k}{\gamma},$ 
    and $\theta_{k} = \gamma$.
\ENDFOR
\end{algorithmic}
\end{algorithm}


Similar to Algorithm \ref{alg:ista_bktr_line_search}, 
Algorithm \ref{alg:fista_bktr_line_search} generates a stochastic process $\{Y_k, X_k, G_k, \Stepsize{k}, T_k\}$ with realizations $\{y_k, x_k, g_k, \alpha_k, t_k\}$. 
We note that the steps 4 and 5 of Algorithm \ref{alg:fista_bktr_line_search} are exactly as Algorithm \ref{alg:ista_bktr_line_search}.
Since the dynamics of the step-size parameter is based on the reference points $y_k$, $\tilde{p}_{\alpha_k} (y_k)$ in Step 4 and the sufficient decrease condition in Step 5, 
the results in Sections \ref{sec:stoc_proc} and \ref{subsec:successful}  hold  for Algorithm \ref{alg:fista_bktr_line_search} under the same assumptions as for 
Algorithm \ref{alg:ista_bktr_line_search}.

In addition to these results,  we prove the following lemma for the dynamics of $ (\alpha_k, t_k)$ in Algorithm  \ref{alg:fista_bktr_line_search}, using similar principles as in \cite{Scheinberg-Goldfarb-Bai}.
\begin{lemma}\label{lem:fista_linesearch}
 We denote $\alpha^{succ}_k$ be the step-size parameter  at the successful iteration up to iteration $k$: 
\begin{align*}
    \alpha^{succ}_k = \alpha_{k'}, \text{ where } k' = \max \{k'' \text{ successful }, k'' \leq k\} .
\end{align*}
We also adopt the convention that $\alpha^{succ}_k = \alpha_0 = \gamma \alpha_1$ if there is no successful iteration up to iteration $k$. Then we have the following properties:

1. $\alpha^{succ}_{k-1} t_{k-1}^2\geq \alpha_k t_k(t_{k}-1)$ for every successful iteration $k$.

2. $\alpha^{succ}_{K} t_{K}^2 \geq \left(\sum_{k \in S, k \le K } \sqrt{\alpha_k} /2\right)^2$ for any number of iterations $K$.

\end{lemma}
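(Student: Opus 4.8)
The plan is to reduce both parts to a single bookkeeping identity linking the auxiliary parameter $\theta_k$, the current step-size $\alpha_k$, and the last successful step-size $\alpha^{succ}_k$. First I would record the algebraic identity built into FISTAStep: since $t_k^{next} = \frac{1}{2}(1 + \sqrt{1 + 4\theta_{k-1}t_{k-1}^2})$, rearranging $2t_k^{next} - 1 = \sqrt{1 + 4\theta_{k-1}t_{k-1}^2}$ and squaring gives
\[
t_k^{next}(t_k^{next} - 1) = \theta_{k-1} t_{k-1}^2 .
\]
On a successful iteration $t_k = t_k^{next}$, so Part~1 follows the instant I can establish $\alpha_k \theta_{k-1} = \alpha^{succ}_{k-1}$: then $\alpha_k t_k(t_k-1) = \alpha_k \theta_{k-1} t_{k-1}^2 = \alpha^{succ}_{k-1} t_{k-1}^2$, and in fact the stated inequality holds with equality.

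The heart of the argument, and what I expect to be the main obstacle, is proving the invariant $\alpha_k \theta_{k-1} = \alpha^{succ}_{k-1}$ for all $k \ge 1$ by induction on $k$. The base case uses the initialization $\theta_0 = \gamma$ together with the convention $\alpha^{succ}_0 = \gamma\alpha_1$, giving $\alpha_1 \theta_0 = \gamma\alpha_1 = \alpha^{succ}_0$. For the inductive step I would split on the status of iteration $k$: if $k$ is unsuccessful then $\alpha_{k+1} = \gamma\alpha_k$, $\theta_k = \theta_{k-1}/\gamma$, and $\alpha^{succ}_k = \alpha^{succ}_{k-1}$, so the factors of $\gamma$ cancel and the invariant propagates; if $k$ is successful then $\alpha_{k+1} = \alpha_k/\gamma$, $\theta_k = \gamma$, and $\alpha^{succ}_k = \alpha_k$, so $\alpha_{k+1}\theta_k = \alpha_k = \alpha^{succ}_k$. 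The delicate part is purely the careful matching of these multiplicative updates and the boundary conventions; once the bookkeeping is set up correctly the invariant is exact, which is precisely why Part~1 holds with equality.

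For Part~2 I would argue by induction over the successful iterations only, after observing that on an unsuccessful iteration neither $\alpha^{succ}_K t_K^2$ nor the sum $\sum_{k \in S,\, k \le K} \sqrt{\alpha_k}/2$ changes (both $t_K$ and the set $\{k \in S : k\le K\}$ are frozen), so it suffices to verify the inequality at successful $K$. Writing $\phi_K = \alpha^{succ}_K t_K^2$ and $\psi_K = \sum_{k\in S,\, k\le K}\sqrt{\alpha_k}/2$, Part~1 (as an equality) gives, at a successful $k$, the recursion $\phi_k = \alpha_k t_k^2 = \phi_{k-1} + \alpha_k t_k = \phi_{k-1} + \sqrt{\alpha_k \phi_k}$, using $\alpha_k t_k = \sqrt{\alpha_k \cdot \alpha_k t_k^2} = \sqrt{\alpha_k \phi_k}$. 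Treating this as a quadratic in $\sqrt{\phi_k}$ and taking the positive root yields $\sqrt{\phi_k} = \frac{1}{2}\big(\sqrt{\alpha_k} + \sqrt{\alpha_k + 4\phi_{k-1}}\big)$; bounding $\sqrt{\alpha_k + 4\phi_{k-1}} \ge 2\sqrt{\phi_{k-1}} \ge 2\psi_{k-1}$ via the induction hypothesis $\phi_{k-1} \ge \psi_{k-1}^2$ gives $\sqrt{\phi_k} \ge \tfrac{1}{2}\sqrt{\alpha_k} + \psi_{k-1} = \psi_k$, i.e. $\phi_k \ge \psi_k^2$. The base case $K=0$ is immediate since $t_0 = 0$ makes both sides zero, and the only care needed is the reduction to successful iterations together with $t_k \ge 1$ so that the positive square root is the correct branch.
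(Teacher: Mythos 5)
Your proposal is correct and takes essentially the same route as the paper: both proofs rest on the exact invariant $\alpha_k \theta_{k-1} = \alpha^{succ}_{k-1}$ (which you establish by induction with a success/failure case split, where the paper instead tracks the accumulated powers of $\gamma$ between consecutive successful iterations), and Part~2 is the same induction over successful iterations culminating in $\sqrt{\alpha^{succ}_{K}}\, t_K \geq \sum_{k \in S,\, k \le K} \sqrt{\alpha_k}/2$. Your quadratic-root derivation of $\sqrt{\phi_k} \geq \tfrac{1}{2}\sqrt{\alpha_k} + \sqrt{\phi_{k-1}}$ is algebraically equivalent to the paper's direct use of the FISTAStep formula via $\tfrac{1}{2}\bigl(1+\sqrt{1+4\theta t^2}\bigr) \geq \tfrac{1}{2} + \sqrt{\theta}\,t$, so the differences are purely presentational.
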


\begin{proof}
1. From the update of the algorithm and the fact that $k$ is successful: 
\begin{align*}
    t_k = t_k^{next} &=  \frac{1 + \sqrt{1+4\theta_{k-1} t_{k-1}^2}}{2},
\end{align*}
we have $ \theta_{k-1} t_{k-1}^2=  t_k(t_{k}-1)$.
The statement of the lemma $\alpha^{succ}_{k-1} t_{k-1}^2\geq \alpha_k t_k(t_{k}-1)$ is equivalent to  
\begin{align*}
\alpha^{succ}_{k-1} \geq \alpha_k \theta_{k-1}.
\end{align*}
Let $k'$ be the previous successful iteration prior to $k$ ($k' < k$), i.e. $\alpha^{succ}_{k-1} = \alpha_{k'}$ and $\alpha^{succ}_{k-1} = \alpha_{0}$ if there is no successful iteration prior to $k$. By the definition of $k'$ we have 
$\alpha_{k'+1} = \tfrac{\alpha_{k'}}{\gamma},$ and $\theta_{k'} = \gamma $.
Since the iterations from $k' + 1$ to $k-1$ are unsuccessful (if $k' <k-1$): 
\begin{align*}
    \alpha_{k} &= \alpha_{k'+1} \cdot  \gamma^{k-k'-1}\nonumber\\
    \theta_{k-1} &= \theta_{k'} \gamma^{k'+1 -k}.
\end{align*}
Therefore 
\begin{align*}
    \alpha_{k} &= \alpha_{k'+1} \cdot  \gamma^{k-k'-1}= \alpha_{k'} \cdot  \gamma^{k-k'-2} \nonumber\\
    \theta_{k-1} &= \theta_{k'} \gamma^{k'+1 -k} = \gamma^{k'+2 -k},
\end{align*}
and it proves the statement: 
    \begin{align}\label{eq:theta}
    \alpha_k \theta_{k-1} = \alpha_{k'} \cdot  \gamma^{k-k'-2} \cdot \gamma^{k'+2 -k} = \alpha_{k'}. 
    \end{align}



2. First, we prove the statement for all iterations $K$ where $K$ is successful. We prove this by induction. 
\begin{itemize}
\item Let $k_1 \geq 1$ be the first successful iteration of the algorithm. By definition we have 
$\alpha^{succ}_{k_1} = \alpha_{k_1}, \text{ since } k_1 = \max \{k'' \text{ successful }, k'' \leq k_1\} $.  
Since $k_1$ is successful:
\begin{align*}
    t_{k_1} = t_{k_1}^{next} = \frac{1 + \sqrt{1+4\theta_{(k_1-1)} t_{(k_1-1)}^2}}{2}= \frac{1 + \sqrt{1+4\theta_{(k_1-1)} \cdot 0}}{2} = 1.
\end{align*}
Therefore the statement holds for $k_1$: 
\begin{align*}
    \alpha^{succ}_{k_1} t_{k_1}^2 =  \alpha_{k_1} \cdot 1 \geq \left( \sqrt{\alpha_{k_1}} /2\right)^2 = \left(\sum_{k \in S, k \le k_1 } \sqrt{\alpha_k} /2\right)^2.
\end{align*}
\item Now for an iteration $K$ where $K$ is successful, we assume the statement holds for the successful iteration $k'$ prior to $K$ ($k'$ is the largest index of a successful iteration satisfying $k' < K$). We have 
$ \alpha^{succ}_{k'} t_{k'}^2 \geq \left(\sum_{k \in S, k \le k' } \sqrt{\alpha_k} /2\right)^2$ and  
\begin{align*}
    \sqrt{\alpha^{succ}_{k'}} t_{k'} \geq \sum_{k \in S, k \le k' } \sqrt{\alpha_k} /2.
\end{align*}

Since both $K$ and $k'$ are successful, $ \alpha^{succ}_{K} =  \alpha_{K} $ and $ \alpha^{succ}_{k'} =  \alpha_{k'} $.
Using the fact that $K$ is successful: 
\begin{align*}
    t_{K} &= t_{K}^{next} = \frac{1 + \sqrt{1+4\theta_{(K-1)} t_{(K-1)}^2}}{2} \geq \frac{1}{2} +  \sqrt{\theta_{(K-1)}}  t_{(K-1)}\\
    &\geq \frac{1}{2} +  \sqrt{\theta_{(K-1)} } \cdot t_{k'} \geq \frac{1}{2} +  \sqrt{\frac{\alpha_{k'}}{\alpha_{K}} } \cdot t_{k'}.
\end{align*}
where the last inequality follows from previous argument \eqref{eq:theta}.
Hence we have 
\begin{align*}
    \sqrt{\alpha^{succ}_{K}} t_{K} = \sqrt{\alpha_{K}} t_{K} \geq \sqrt{\alpha_{k'}} t_{k'} + \sqrt{\alpha_K} /2 = \sqrt{\alpha^{succ}_{k'}} t_{k'} + \sqrt{\alpha_K} /2.
\end{align*}
Therefore 
\begin{align*}
    \sqrt{\alpha^{succ}_{K}} t_{K} \geq \sum_{k \in S, k \le k' } \sqrt{\alpha_k} /2 + \sqrt{\alpha_K} /2 = \sum_{k \in S, k \le K } \sqrt{\alpha_k} /2,
\end{align*}
and the statement holds for iteration $K$:
$ \alpha^{succ}_{K} t_{K}^2 \geq \left(\sum_{k \in S, k \le K } \sqrt{\alpha_k} /2\right)^2$.
\item By induction, the statement holds for all the successful iterations. For an arbitrary unsuccessful iteration $K$, the statement holds for the successful iteration $k'$ prior to $K$ and neither right hand side  {nor} left hand side term changes at  unsuccessful iteration. The statement is also true in the case when there is no successful iteration prior to $K$ as both sides equal to 0 ($t_0 = 0$). Therefore it holds for every iteration. 
\end{itemize}
~\qed \end{proof}

\subsection{Expected Complexity Analysis of Algorithm \ref{alg:fista_bktr_line_search} }\label{subsec:analysis2}
As in the case of Algorithm \ref{alg:ista_bktr_line_search}, the expected complexity $\EE (N_\epsilon)$ is bounded based on $\EE(N_G)$ (Theorem \ref{th:framework}). Therefore all we need is to bound the expected number of {\em good} iterations of Algorithm \ref{alg:fista_bktr_line_search}. We consider the following quantity: 
\begin{align}\label{eq:define_m}
    m_k = 2\alpha^{succ}_{k} t_{k}^2v_{k} +\norm{u_k }^2,
\end{align}
where $v_k =  F(x_k) - F^* $ and $u_k = (t_{k}-1)x^{prev}_{k} + x^* - t_k x_k $. 
While $v_k$ denotes the usual optimality gap, $u_k$ resembles the usual term in deterministic FISTA algorithm analysis \cite{Beck-Teboulle,Scheinberg-Goldfarb-Bai}.
The next Lemma states that  $m_{k}$ do not change during unsuccessful iterations. 
\begin{lemma}[$m_{k}$ on unsuccsessful steps]\label{lem:unsuccessful_step_bound}
Let $m_k$ be defined as in \eqref{eq:define_m}. 
If iteration $k$ of Algorithm \ref{alg:fista_bktr_line_search}  is unsuccessful, then we have 
\begin{align*}m_{k} = m_{k-1}. \end{align*}
\end{lemma}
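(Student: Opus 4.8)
The plan is to prove $m_k = m_{k-1}$ termwise by verifying that every quantity entering the definition \eqref{eq:define_m} of $m_k$, namely $\alpha^{succ}_{k}$, $t_k$, $v_k$, and $u_k$, is left unchanged by an unsuccessful step. The single point requiring care is to distinguish $\alpha_k$, which \emph{does} decrease on an unsuccessful iteration, from $\alpha^{succ}_{k}$, which is the frozen step-size from the last accepted step and which does \emph{not} change.

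First I would read off the unsuccessful branch (Step 7) of Algorithm \ref{alg:fista_bktr_line_search}, which sets $\left(x_{k},x^{prev}_{k},t_k\right) = \left(x_{k-1}, x^{prev}_{k-1}, t_{k-1}\right)$. This gives the three iterate-level identities $x_k = x_{k-1}$, $x^{prev}_k = x^{prev}_{k-1}$, and $t_k = t_{k-1}$ directly, with no computation.

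Next I would propagate these identities into the two pieces of $m_k$. Because $x_k = x_{k-1}$, the optimality gap is unchanged, $v_k = F(x_k)-F^* = F(x_{k-1})-F^* = v_{k-1}$. Substituting all three identities into $u_k = (t_{k}-1)x^{prev}_{k} + x^* - t_k x_k$ yields $u_k = (t_{k-1}-1)x^{prev}_{k-1} + x^* - t_{k-1} x_{k-1} = u_{k-1}$, and hence $\norm{u_k}^2 = \norm{u_{k-1}}^2$.

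The remaining step, which is the only genuine (if mild) obstacle, is to argue $\alpha^{succ}_{k} = \alpha^{succ}_{k-1}$. Here I would appeal to the definition of $\alpha^{succ}_k$ from Lemma \ref{lem:fista_linesearch}: it is $\alpha_{k'}$ where $k' = \max\{k'' \text{ successful}, k'' \le k\}$. Since iteration $k$ is unsuccessful, the maximizing index for $k$ equals that for $k-1$, so $\alpha^{succ}_{k} = \alpha^{succ}_{k-1}$, notwithstanding the fact that Step 7 sets $\alpha_{k+1} = \gamma\alpha_k$ and $\theta_k = \theta_{k-1}/\gamma$. Combining the three frozen quantities, $m_k = 2\alpha^{succ}_{k} t_{k}^2 v_{k} + \norm{u_k}^2 = 2\alpha^{succ}_{k-1} t_{k-1}^2 v_{k-1} + \norm{u_{k-1}}^2 = m_{k-1}$, which is the claim.
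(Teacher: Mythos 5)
Your proof is correct and matches the argument the paper relies on: the paper actually states Lemma \ref{lem:unsuccessful_step_bound} without proof, treating it as immediate from the unsuccessful-step update $\left(x_{k},x^{prev}_{k},t_k\right) = \left(x_{k-1}, x^{prev}_{k-1}, t_{k-1}\right)$, and your termwise verification (including the key observation that $\alpha^{succ}_k$, unlike $\alpha_k$, is frozen since the set of successful iterations up to $k$ is unchanged) is exactly the omitted argument.
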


Lemma \ref{lem:successful_step_bound} 
shows how the quantity $m_k$ changes in the successful steps of Algorithm \ref{alg:fista_bktr_line_search}, under the assumptions of convexity and $L$-Lipschitz smoothness of the objective function. 
This Lemma is a stochastic version of the deterministic bound in Lemma 3.3 \cite{Scheinberg-Goldfarb-Bai} for FISTA-BKTR Algorithm. 

\begin{lemma}[$m_k$ on  successful step]\label{lem:successful_step_bound}
Under Assumption \ref{assumpt:convex_smooth}, if $k$ is a successful iteration of Algorithm \ref{alg:fista_bktr_line_search}, then we have:
\begin{align*}
&2\alpha^{succ}_{k}t_k^2 v_k -2\alpha^{succ}_{k-1} t_{k-1}^2v_{k-1} \leq \norm{u_{k-1}}^2 -  \norm{u_k }^2 + \lambda_k \cdot \| u_{k-1}\|,
\end{align*}
where 
\begin{align*}
\lambda_k =  \|2\alpha_kt_k (\nabla f(y_k )- g_k )\|.
\end{align*}
Equivalently, 
\begin{align*}
m_{k} -m_{k-1} \leq \lambda_k \cdot \|u_{k-1}\|.
\end{align*}
\end{lemma}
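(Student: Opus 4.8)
The plan is to mirror the structure of the deterministic FISTA-BKTR analysis in \cite{Scheinberg-Goldfarb-Bai}, but carrying the stochastic gradient error as an explicit additive term, exactly as was done for ISTA in Lemma \ref{lem:successful_step_bound1}. First I would establish the ``per-point'' inequality coming from the sufficient decrease condition. Since $k$ is successful, we have $F(x_k)\le \tilde{Q}_{\alpha_k}(x_k,y_k)$ where $x_k=\tilde{p}_{\alpha_k}(y_k)$. Combining this with the convexity lower bounds for $f$ (evaluated at $y_k$) and $h$ (evaluated at $x_k$, using the subgradient identity $y_k-x_k=\alpha_k[\gamma(y_k)+g_k]$ that characterizes the prox minimizer) gives, for any point $x$, a bound of the form
\begin{align*}
F(x)-F(x_k)\geq (\nabla f(y_k)-g_k)^\top(x-y_k) + \frac{1}{2\alpha_k}\left(\|x-x_k\|^2-\|x-y_k\|^2\right),
\end{align*}
which is the FISTA analogue of the key inequality derived inside the proof of Lemma \ref{lem:successful_step_bound1}, with $y_k$ replacing $x_{k-1}$.

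Next I would apply this inequality twice, at the two natural points from the FISTA convergence argument: once at $x=x^{prev}_{k-1}$ (giving a bound on $v_{k-1}=F(x^{prev}_{k-1})-F^*$, noting $x^{prev}_{k}=x_{k-1}$ on a successful step) and once at $x=x^*$ (giving a bound on $v_k=F(x_k)-F^*$). I would then take the standard FISTA convex combination with weights $(t_k-1)$ and $1$, i.e. multiply the two instances appropriately and add, so that the left-hand side produces $t_k^2 v_k - t_k(t_k-1) v_{k-1}$ and the right-hand side collapses, via the identity $u_k=(t_k-1)x^{prev}_k+x^*-t_kx_k$, into a difference of squared norms $\|u_{k-1}\|^2-\|u_k\|^2$ plus the stochastic cross term. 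The gradient-error contributions from the two applications combine into a single inner product $2\alpha_k t_k(\nabla f(y_k)-g_k)^\top(\text{something involving }u_{k-1})$, which by Cauchy--Schwarz is bounded by $\lambda_k\cdot\|u_{k-1}\|$ with $\lambda_k=\|2\alpha_k t_k(\nabla f(y_k)-g_k)\|$.

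To convert the scaling from $\alpha_k t_k^2$ to $\alpha^{succ}_{k}t_k^2$ and $\alpha^{succ}_{k-1}t_{k-1}^2$ I would invoke Part 1 of Lemma \ref{lem:fista_linesearch}, namely $\alpha^{succ}_{k-1}t_{k-1}^2\geq \alpha_k t_k(t_k-1)$, which lets the $v_{k-1}$ coefficient be upgraded to $2\alpha^{succ}_{k-1}t_{k-1}^2$ (the inequality direction is consistent because $v_{k-1}\geq 0$), and on a successful step $\alpha^{succ}_k=\alpha_k$ so the $v_k$ coefficient is exactly $2\alpha^{succ}_k t_k^2$. The equivalent reformulation in terms of $m_k$ is then immediate by rearranging and recalling $m_k=2\alpha^{succ}_k t_k^2 v_k+\|u_k\|^2$.

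The main obstacle I anticipate is the bookkeeping in the convex-combination step: getting the $t_k$, $(t_k-1)$ weights to align so that the quadratic terms telescope into exactly $\|u_{k-1}\|^2-\|u_k\|^2$ while the linear stochastic terms coalesce into a single $u_{k-1}$-dependent error, and then correctly deploying the $\alpha^{succ}$-versus-$\alpha_k$ bound from Lemma \ref{lem:fista_linesearch} in the direction that preserves the inequality. In the deterministic case this telescoping is standard, so the real novelty is tracking the error term cleanly; the nonnegativity of $v_{k-1}$ is what makes the replacement $\alpha_k t_k(t_k-1)\to \alpha^{succ}_{k-1}t_{k-1}^2$ safe, and I would flag that as the one place where the argument genuinely differs from simply substituting noisy gradients into the clean FISTA proof.
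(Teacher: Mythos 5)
Your overall strategy is exactly the paper's: derive a per-point inequality from the sufficient decrease condition, convexity of $f$ and $h$, and the prox characterization $y_k - x_k = \alpha_k[\gamma(y_k)+g_k]$ (your form of that inequality is algebraically equivalent to the one the paper derives in terms of $[\nabla f(y_k)+\gamma(y_k)]$ and $\frac{\alpha_k}{2}\|\gamma(y_k)+g_k\|^2$); take the FISTA combination with weights $t_k(t_k-1)$ and $t_k$; collapse the quadratic terms via the FISTAStep update; bound the stochastic cross term by Cauchy--Schwarz to get $\lambda_k\|u_{k-1}\|$; and finally upgrade $\alpha_k t_k(t_k-1)$ to $\alpha^{succ}_{k-1}t_{k-1}^2$ using Part 1 of Lemma \ref{lem:fista_linesearch}, the nonnegativity of $v_{k-1}$, and $\alpha^{succ}_k=\alpha_k$ on successful steps. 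That last observation, including the role of $v_{k-1}\ge 0$, is precisely how the paper concludes.

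There is, however, one concrete error: the choice of the second evaluation point. You apply the per-point inequality at $x=x^{prev}_{k-1}$ and identify $v_{k-1}$ with $F(x^{prev}_{k-1})-F^*$. By the paper's definition $v_{k-1}=F(x_{k-1})-F^*$, and the correct evaluation point is $x_{k-1}$ --- the point that becomes $x^{prev}_k$ on a successful step (this is the identity you quote, but with index $k$, not $k-1$). The distinction breaks the argument in two places if executed as written. First, $m_{k-1}$ contains $F(x_{k-1})-F^*$; since FISTA iterates are not monotone in $F$, there is no usable inequality between $F(x^{prev}_{k-1})$ and $F(x_{k-1})$, so a bound involving $F(x^{prev}_{k-1})$ cannot be converted into the claimed bound on $m_k-m_{k-1}$. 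Second, the norm telescoping relies on the FISTAStep identity $t_k y_k = t_k x_{k-1} + (t_{k-1}-1)(x_{k-1}-x^{prev}_{k-1})$, which yields $(t_k-1)x_{k-1}+x^*-t_k y_k = u_{k-1}$ and, with $x^{prev}_k=x_{k-1}$, $(t_k-1)x_{k-1}+x^*-t_k x_k = u_k$; if the weighted point is $x^{prev}_{k-1}$ instead, the combinations $(t_k-1)x^{prev}_{k-1}+x^*-t_k y_k$ and $(t_k-1)x^{prev}_{k-1}+x^*-t_k x_k$ equal neither $u_{k-1}$ nor $u_k$, so the quadratic terms do not collapse to $\|u_{k-1}\|^2-\|u_k\|^2$. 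In other words, the step you yourself flagged as the main obstacle is exactly where the plan fails as stated; replacing $x^{prev}_{k-1}$ by $x_{k-1}$ (equivalently $x^{prev}_k$) fixes it, after which your outline reproduces the paper's proof.
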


\begin{proof}
Since $k$ is a successful iteration, we have $F(\tilde{p}_{\alpha_k} (y_k)) \leq \tilde{Q}_{\alpha_k}(\tilde{p}_{\alpha_k} (y_k), y_k)$. 
Also $x_k = \tilde{p}_{\alpha_k} (y_k) $. 
Hence 
\begin{align*}
    F(x_k) = f(x_k) + h(x_k) \leq f(y_k) +(x_k -y_k)^\top g_k + \frac{1}{2\alpha_k}\|x_k-y_k\|^2 + h(x_k) .
\end{align*}
By the  convexity of $f$ at $x$ and $y_k$ and the convexity of $h$ at $x$ and $x_k$:
\begin{align*}
f(x) \geq f(y_k) + \nabla f(y_k )^\top (x-y_k),
\nonumber\\
h(x) \geq h(x_k) + \gamma(y_k)^\top (x-x_k),
\end{align*}
where the second bound holds for every subgradient $\gamma(y_k)$ of $h$ at $x_k$. 
We have 
\begin{align*}
F(x) &= f(x) + h(x)\geq f(y_k)+ h(x_k) + \nabla f(y_k )^\top (x-y_k)  + \gamma(y_k)^\top (x-x_k)
\nonumber\\
&= f(x) + h(x)\geq f(y_k)+ h(x_k) + [\nabla f(y_k )+ \gamma(y_k)]^\top (x-y_k)  + \gamma(y_k)^\top ( y_k -x_k) 
\end{align*}
Combine this with the previous bound of $F(x_k)$ we have 
\begin{align*}
&F(x) - F(x_k) \nonumber\\&\geq  [\nabla f(y_k )+ \gamma(y_k)]^\top (x-y_k)  + \gamma(y_k)^\top ( y_k -x_k) - (x_k -y_k)^\top g_k - \frac{1}{2\alpha_k}\|x_k-y_k\|^2\nonumber\\
&=  [\nabla f(y_k )+ \gamma(y_k)]^\top (x-y_k)  + (\gamma(y_k)+g_k)^\top ( y_k -x_k)  - \frac{1}{2\alpha_k}\|x_k-y_k\|^2.
\end{align*}
Using the fact that $x_k = \tilde{p}_{\alpha_k} (y_k) =\arg\min_y \tilde{Q}_{\alpha_k}(y, y_k)$, we get that $ y_k - x_k= \alpha_k[\gamma(y_k)+g_k]$ for some subgradient $\gamma(y_k)$ of $h$ at $x_k$. Therefore
\begin{align*}
F(x) - F(x_k)
&\geq  [\nabla f(y_k )+ \gamma(y_k)]^\top (x-y_k)  + \alpha_k\|\gamma(y_k)+g_k) \|^2 - \frac{1}{2} \alpha_k \|\gamma(y_k)+g_k)\|^2\nonumber\\
&=  [\nabla f(y_k )+ \gamma(y_k)]^\top (x-y_k)  + \frac{\alpha_k}{2}\|\gamma(y_k)+g_k) \|^2 .
\end{align*}
Apply this equation for $x = x_{k-1}$ and $x=x^*$ we have 
\begin{align*}
F(x_{k-1}) - F(x_k) &\geq  [\nabla f(y_k )+ \gamma(y_k)]^\top (x_{k-1}-y_k)  + \frac{\alpha_k}{2}\|\gamma(y_k)+g_k) \|^2 ,
\nonumber\\
F(x^*) - F(x_k) &\geq  [\nabla f(y_k )+ \gamma(y_k)]^\top (x^*-y_k)   + \frac{\alpha_k}{2}\|\gamma(y_k)+g_k) \|^2 .
\end{align*}
Let $v_k =  F(x_k) - F(x^*) $ then 
\begin{align*}
v_{k-1} - v_k &\geq  [\nabla f(y_k )+ \gamma(y_k)]^\top (x_{k-1}-y_k)  + \frac{\alpha_k}{2}\|\gamma(y_k)+g_k) \|^2  ,
\nonumber\\
-v_k &\geq  [\nabla f(y_k )+ \gamma(y_k)]^\top (x^*-y_k)   + \frac{\alpha_k}{2}\|\gamma(y_k)+g_k) \|^2  .
\end{align*}
Multiplying the first equality by $t_{k}-1$ and adding to the second equality, we get
\begin{align*}
(t_{k}-1)v_{k-1} - t_k v_k &\geq  [\nabla f(y_k )+ \gamma(y_k)]^\top ((t_{k}-1)x_{k-1} + x^* - t_k y_k )   + \frac{\alpha_k}{2} t_k \|\gamma(y_k)+g_k) \|^2 .
\end{align*}
Multiplying the previous inequality by $t_k$ we have
 \begin{align*}
 &t_k(t_{k}-1)v_{k-1} - t_k^2 v_k \nonumber\\
 &\geq t_k [\nabla f(y_k )+ \gamma(y_k)]^\top ((t_{k}-1)x_{k-1} + x^* - t_k y_k )   + \frac{\alpha_k}{2}t_k^2\|\gamma(y_k)+g_k) \|^2 \nonumber\\
&=  t_k [\nabla f(y_k )- g_k +g_k + \gamma(y_k)]^\top ((t_{k}-1)x_{k-1} + x^* - t_k y_k )  + \frac{\alpha_k}{2}t_k^2\|\gamma(y_k)+g_k) \|^2
\nonumber\\
&=  t_k [g_k + \gamma(y_k)]^\top ((t_{k}-1)x_{k-1} + x^* - t_k y_k )  + \frac{\alpha_k}{2}t_k^2\|\gamma(y_k)+g_k) \|^2
 \nonumber\\&
+ t_k [\nabla f(y_k )- g_k ]^\top ((t_{k}-1)x_{k-1} + x^* - t_k y_k ) 
\nonumber\\
&=  t_k \frac{1}{\alpha_k}(y_k - x_k)^\top ((t_{k}-1)x_{k-1} + x^* - t_k y_k )  + \frac{\alpha_k}{2}t_k^2\|\gamma(y_k)+g_k) \|^2
 \nonumber\\&
+ t_k [\nabla f(y_k )- g_k ]^\top ((t_{k}-1)x_{k-1} + x^* - t_k y_k ) 
\nonumber\\
&=  t_k 
\frac{1}{\alpha_k}(y_k - x_k)^\top ((t_{k}-1)x_{k-1} + x^* - t_k x_k )   + t_k 
\frac{1}{\alpha_k}(y_k - x_k)^\top t_k(x_k -y_k)  \nonumber\\&
+ \frac{1}{2\alpha_k}t_k^2\|x_k -y_k \|^2
+ t_k [\nabla f(y_k )- g_k ]^\top ((t_{k}-1)x_{k-1} + x^* - t_k y_k ) 
\nonumber\\
&=  t_k 
\frac{1}{\alpha_k}(y_k - x_k)^\top ((t_{k}-1)x_{k-1} + x^* - t_k x_k )   -  
\frac{1}{2\alpha_k}\norm{t_k (x_k -y_k)}^2 \nonumber\\&
+ t_k [\nabla f(y_k )- g_k ]^\top ((t_{k}-1)x_{k-1} + x^* - t_k y_k ) .
\end{align*}
Multiplying the final statement by $2\alpha_k $ we get 
 \begin{align}\label{eq_bound_mu_t}
&~2\alpha_k t_k(t_{k}-1)v_{k-1} - 2\alpha_kt_k^2 v_k \nonumber\\
&\geq 2t_k 
(y_k - x_k)^\top ((t_{k}-1)x_{k-1} + x^* - t_k x_k )   -  
\norm{t_k (x_k -y_k)}^2  \nonumber\\
&  + 2\alpha_kt_k [\nabla f(y_k )- g_k ]^\top ((t_{k}-1)x_{k-1} + x^* - t_k y_k ) .
\end{align}
Now we consider the following term: $2t_k 
(y_k - x_k)^\top ((t_{k}-1)x_{k-1} + x^* - t_k x_k )   -  
\norm{t_k (x_k -y_k)}^2$.
Let $a  = t_k 
(y_k - x_k), b = (t_{k}-1)x_{k-1} + x^* - t_k x_k$ we have 
\begin{align}\label{eq:03}
    & \quad \ 2t_k 
(y_k - x_k)^\top ((t_{k}-1)x_{k-1} + x^* - t_k x_k )   -  
\norm{t_k (x_k -y_k)}^2\nonumber\\ &=  2ab - a^2 \nonumber\\
&= b^2 -b^2 +2ab - a^2= b^2 -(a-b)^2\nonumber\\
&=\norm{(t_{k}-1)x_{k-1} + x^* - t_k x_k }^2 -\norm{(t_{k}-1)x_{k-1} + x^* - t_k x_k  -t_k (y_k - x_k)}^2\nonumber\\
&=\norm{(t_{k}-1)x_{k-1} + x^* - t_k x_k }^2 -\norm{(t_{k}-1)x_{k-1} + x^* - t_k y_k  }^2.
\end{align}
Since $k$ is a successful iteration, we have that $x^{prev}_{k} = x_{k-1}$. From the definition of $u_k$
\begin{align*}u_k = (t_{k}-1)x^{prev}_{k} + x^* - t_k x_k, \end{align*} 
we have that 
\begin{align*}u_k = (t_{k}-1)x_{k-1} + x^* - t_k x_k. \end{align*} 
Now from the definition of FISTAStep \eqref{eq:fista_step}
\begin{align*}y_k = x_{k-1} + \frac{t_{k-1}-1}{t_{k}} ( x_{k-1} - x^{prev}_{k-1} ),\end{align*}
we have 
\begin{align*}t_k y_k = t_kx_{k-1} + (t_{k-1}-1) ( x_{k-1} - x^{prev}_{k-1} ).\end{align*}
Combining these equations with \eqref{eq:03} we get: 
\begin{align*}
& \quad \ 2t_k 
(y_k - x_k)^\top ((t_{k}-1)x_{k-1} + x^* - t_k x_k )   -  
\norm{t_k (x_k -y_k)}^2\nonumber\\
&=\norm{(t_{k}-1)x_{k-1} + x^* - t_k x_k }^2 -\norm{(t_{k}-1)x_{k-1} + x^* - t_k y_k  }^2\nonumber\\
&=\norm{u_k }^2 -\norm{(t_{k}-1)x_{k-1} + x^* - t_kx_{k-1} - (t_{k-1}-1) ( x_{k-1} -  x^{prev}_{k-1} )  }^2\nonumber\\
&=\norm{u_k }^2 -\norm{-t_{k-1} x_{k-1} + x^*  + (t_{k-1}-1)  x^{prev}_{k-1}  }^2\nonumber\\
&=\norm{u_k }^2 -\norm{-u_{k-1}}^2.
\end{align*}
Substituting back to previous equality \eqref{eq_bound_mu_t} we get 
 \begin{align*}
&2\alpha_k t_k(t_{k}-1)v_{k-1} - 2\alpha_kt_k^2 v_k \nonumber\\
&\geq \norm{u_k }^2 -\norm{u_{k-1}}^2  + 2\alpha_kt_k [\nabla f(y_k )- g_k ]^\top ((t_{k}-1)x_{k-1} + x^* - t_k y_k ) .
\end{align*}
Therefore  
  \begin{align*}
&2\alpha_kt_k^2 v_k -2\alpha_k t_k(t_{k}-1)v_{k-1}\nonumber\\ 
&\leq \norm{u_{k-1}}^2 -  \norm{u_k }^2 + \|2\alpha_kt_k (\nabla f(y_k )- g_k )\| \cdot \| u_{k-1}\|.
\end{align*}
From Lemma \ref{lem:fista_linesearch} we have $\alpha^{succ}_{k-1} t_{k-1}^2\geq \alpha_k t_k(t_{k}-1)$
for every successful iteration $k$. Also since $k$ is successful, we have  $\alpha_k = \alpha^{succ}_{k}$.
Therefore we have the final bound of Lemma \ref{lem:successful_step_bound}.
~\qed \end{proof}

Lemma \ref{lem:successful_step_bound} shows how the term $2\alpha^{succ}_{k} t_{k}^2v_{k} +\norm{u_k }^2$ changes between the successful iterations, where 
$\|2\alpha_kt_k (\nabla f(y_k )- g_k )\| \cdot \| u_{k-1}\|$ dictates how the noise of the model at iteration $k$ affects the bound.

We note that here $\lambda_k$ denotes a slightly different quantity  than in stochastic ISTA step search Algorithm \ref{alg:ista_bktr_line_search}, however, the role of these quantities in the two algorithms is the same, hence we use the same notation. 
In the next lemma, we bound the total accumulation of this error in the bound on the overall accuracy. 
\begin{lemma}\label{lem:after_K}
Given any integer $K$, after $K$ iterations of the algorithm, the following bound holds: 
\begin{align*}
    2\alpha^{succ}_{K} t_{K}^2v_{K} \leq 3\alpha^{succ}_{0} t_{0}^2v_{0} +2\norm{u_0 }^2 + \sum_{k=1}^{K} \lambda_{k} \cdot \left[\sum_{i=1}^{k-1} \lambda_{i} \right]  + 2\sum_{k=1}^{K} \lambda_{k} ^2  .
\end{align*}
\end{lemma}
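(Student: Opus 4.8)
The plan is to reproduce the telescoping argument of Lemma \ref{lem:after_K1}, but with the scalar distance $\|x_k-x^*\|$ replaced by the composite quantity $\|u_k\|$ and the optimality surrogate $2\alpha_k(F(x_k)-F^*)$ replaced by $2\alpha^{succ}_kt_k^2v_k$. The first step is to unify the two per-step estimates into a single recursion valid for \emph{every} iteration: on a successful step Lemma \ref{lem:successful_step_bound} gives $m_k-m_{k-1}\le\lambda_k\|u_{k-1}\|$, while on an unsuccessful step Lemma \ref{lem:unsuccessful_step_bound} gives $m_k-m_{k-1}=0$, which is also $\le\lambda_k\|u_{k-1}\|$ since $\lambda_k\ge0$. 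Hence $m_k-m_{k-1}\le\lambda_k\|u_{k-1}\|$ on all iterations, and summing over $k=1,\dots,K$ telescopes (recalling $m_k$ from \eqref{eq:define_m}) to
\begin{align*}
2\alpha^{succ}_Kt_K^2v_K+\|u_K\|^2=m_K\le m_0+\sum_{k=1}^K\lambda_k\|u_{k-1}\|.
\end{align*}

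From here the argument splits exactly as in the ISTA case. Dropping the nonnegative term $2\alpha^{succ}_Kt_K^2v_K$ (note $v_K=F(x_K)-F^*\ge0$) yields the self-referential inequality $\|u_K\|^2\le m_0+\sum_{k=1}^K\lambda_k\|u_{k-1}\|$ for the nonnegative sequence $\{\|u_k\|\}$. After reindexing the sum so that the initial term is isolated, this has precisely the form required by Lemma 1 in \cite{Schmidt-Roux-Bach} with $Z=m_0+\lambda_1\|u_0\|$, which I would invoke to obtain $\|u_k\|\le\sum_{i=1}^{k+1}\lambda_i+Z^{1/2}$, using subadditivity of the square root to bound $Z^{1/2}$ by $\|u_0\|+\lambda_1$ plus the initial accelerated term.

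Finally I would substitute $\|u_{k-1}\|\le\sum_{i=1}^k\lambda_i+Z^{1/2}$ back into the telescoped bound for $2\alpha^{succ}_Kt_K^2v_K$. The double sum $\sum_k\lambda_k\sum_{i\le k}\lambda_i$ splits into $\sum_k\lambda_k\sum_{i\le k-1}\lambda_i+\sum_k\lambda_k^2$ by isolating the diagonal, and the cross term $Z^{1/2}\sum_k\lambda_k$ is controlled by Young's inequality, producing a further copy of the initial data together with $\sum_k\lambda_k^2$ contributions. Because the algorithm is initialized with $t_0=0$, the term $m_0$ reduces to $\|u_0\|^2$, so the whole accelerated initial quantity $\alpha^{succ}_0t_0^2v_0$ vanishes; the coefficients $3\alpha^{succ}_0t_0^2v_0+2\|u_0\|^2$ in the statement then simply record two appearances of the initial-data terms (one from the telescoped $m_0$, one from the $Z^{1/2}$ cross term), written symbolically so that the bound remains valid for any initialization.

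I expect the only genuine difficulty to be bookkeeping rather than anything conceptual. The delicate point is verifying that combining the successful- and unsuccessful-step lemmas yields a clean single-step recursion and that the resulting inequality for $\|u_K\|^2$ lands exactly on the hypothesis of Lemma 1 in \cite{Schmidt-Roux-Bach}; once that is in place the computation is the verbatim analogue of Lemma \ref{lem:after_K1}. Crucially, the genuinely accelerated content---the relation $\alpha^{succ}_{k-1}t_{k-1}^2\ge\alpha_kt_k(t_k-1)$ that makes $2\alpha^{succ}_kt_k^2v_k$ the right Lyapunov quantity---has already been discharged inside Lemma \ref{lem:successful_step_bound} via Lemma \ref{lem:fista_linesearch}, so no new reasoning about the FISTA dynamics is required at this stage.
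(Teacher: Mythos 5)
Your proposal is correct and follows the paper's proof essentially step for step: telescoping $m_k$ via Lemmas \ref{lem:unsuccessful_step_bound} and \ref{lem:successful_step_bound}, dropping the nonnegative term $2\alpha^{succ}_{K}t_K^2v_K$, applying Lemma 1 of \cite{Schmidt-Roux-Bach} with $Z = m_0 + \lambda_1\norm{u_0}$, and substituting the resulting bound on $\norm{u_{k-1}}$ back into the key inequality, isolating the diagonal of the double sum and absorbing the cross term $Z^{1/2}\sum_k\lambda_k$ by Young's inequality. Even your accounting of the constant $3\alpha^{succ}_{0}t_0^2v_0 + 2\norm{u_0}^2$ (one copy of the initial data from the telescoped $m_0$, one from the cross term) coincides with the paper's derivation.
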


\begin{proof}
For every $K$, from Lemma \ref{lem:unsuccessful_step_bound}  we have
\begin{align*}
    m_K - m_0 = \sum_{k \in S, k\leq K} (m_{k} -m_{k-1}),
\end{align*}
where $S$ is the set of successful iterations.
We apply Lemma \ref{lem:successful_step_bound} for successful iterations and then 
\begin{align*}
    m_K - m_0 &= \sum_{k \in S, k\leq K} (m_{k} -m_{k-1})\leq \sum_{k \in S, k\leq K} \lambda_k \cdot \|u_{k-1}\| \leq \sum_{k=1}^{K} \lambda_k \cdot \| u_{k-1}  \|  .
\end{align*}
Plugging this and the definition of $m_k$ back to the previous inequality, we have our key bound: 
\begin{align}\label{eq:thm_key}
    2\alpha^{succ}_{K} t_{K}^2v_{K} +\norm{u_K }^2 
    &\leq \left(2\alpha^{succ}_{0} t_{0}^2v_{0} +\norm{u_0 }^2\right) + \sum_{k=1}^{K} \lambda_k \cdot \| u_{k-1}\| .
\end{align}
Similar to the proof of ISTA, 
we follow the argument in \cite{Schmidt-Roux-Bach} with two steps. First, we use the key bound \eqref{eq:thm_key} to bound $\| u_{k-1}\|$ in terms of the noise terms $\lambda_k$. Then we use this bound on $\| u_{k-1}\|$ to substitute into the last sum of the key bound \eqref{eq:thm_key} and obtain the final derivation in terms of the noise $\lambda_k$.
Ignoring the positive term $2\alpha^{succ}_{K} t_{K}^2v_{K}$, we have 
\begin{align*}
    \norm{u_K }^2 
    &\leq \left(2\alpha^{succ}_{0} t_{0}^2v_{0} +\norm{u_0 }^2\right) + \sum_{k=1}^K \lambda_k \cdot \| u_{k-1}\| \nonumber\\
    &= 2\alpha^{succ}_{0} t_{0}^2v_{0} +\norm{u_0 }^2 + \lambda_1 \cdot \| u_{0}\| + \sum_{k=1}^{K} \lambda_{k+1} \cdot \| u_{k}\|.
\end{align*}
This is equivalent to 
\begin{align*}
    \norm{u_K }^2 
    \leq Z + \sum_{k=1}^{K} \lambda_{k+1} \| u_{k}\|,
\end{align*}
where $Z = 2\alpha^{succ}_{0} t_{0}^2v_{0} +\norm{u_0 }^2 + \lambda_1 \| u_{0}\|$.
Using this inequality and applying Lemma 1 in \cite{Schmidt-Roux-Bach} for the non-negative sequence $\{\|u_K\|\}$, we have
\allowdisplaybreaks
\begin{align*}
    \| u_{k}\| &\leq \frac{1}{2} \sum_{i=1}^{k} \lambda_{i+1} + \left( Z + \left(\frac{1}{2} \sum_{i=1}^{k} \lambda_{i+1} \right)^2\right)^{1/2}
    \leq  \sum_{i=1}^{k} \lambda_{i+1} +  Z^{1/2} \nonumber\\
    &\leq  \sum_{i=1}^{k+1} \lambda_{i} + \left(2\alpha^{succ}_{0} t_{0}^2v_{0} +2\norm{u_0 }^2  \right)^{1/2},
\end{align*}
where the last line follows from the fact that
\begin{align*}
    Z^{1/2} &= \sqrt{2\alpha^{succ}_{0} t_{0}^2v_{0} +\norm{u_0 }^2 + \lambda_1 \| u_{0}\|  } \leq  \sqrt{2\alpha^{succ}_{0} t_{0}^2v_{0} +\norm{u_0 }^2 + \lambda_1^2 +\| u_{0}\|^2  } \nonumber\\
    &\leq \sqrt{2\alpha^{succ}_{0} t_{0}^2v_{0} +2\norm{u_0 }^2 + \lambda_1^2  } \leq  \lambda_1 + \left(2\alpha^{succ}_{0} t_{0}^2v_{0} +2\norm{u_0 }^2  \right)^{1/2}.
\end{align*}
Dropping the term $\norm{u_K }^2 $ in 
the inequality \eqref{eq:thm_key} and using the bound on $\| u_{k}\|$ we have: 
\allowdisplaybreaks
\begin{align*}
    &2\alpha^{succ}_{K} t_{K}^2v_{K} \nonumber\\
    &\leq 2\alpha^{succ}_{0} t_{0}^2v_{0} +\norm{u_0 }^2 + \sum_{k=1}^{K} \lambda_{k} \cdot \| u_{k-1}\| \nonumber\\
    &\leq 2\alpha^{succ}_{0} t_{0}^2v_{0} +\norm{u_0 }^2 + \sum_{k=1}^{K} \lambda_{k} \cdot \left[\sum_{i=1}^{k} \lambda_{i} + \left(2\alpha^{succ}_{0} t_{0}^2v_{0} +2\norm{u_0 }^2  \right)^{1/2} \right] \nonumber\\
    &= 2\alpha^{succ}_{0} t_{0}^2v_{0} +\norm{u_0 }^2 + \sum_{k=1}^{K} \lambda_{k} \cdot \left[\sum_{i=1}^{k} \lambda_{i} \right] + \left(2\alpha^{succ}_{0} t_{0}^2v_{0} +2\norm{u_0 }^2  \right)^{1/2} \sum_{k=1}^{K} \lambda_{k}   \nonumber\\
    &\leq 2\alpha^{succ}_{0} t_{0}^2v_{0} +\norm{u_0 }^2 + \sum_{k=1}^{K} \lambda_{k} \cdot \left[\sum_{i=1}^{k-1} \lambda_{i} \right]  + \sum_{k=1}^{K} \lambda_{k} ^2+  \frac{1}{2} \left(2\alpha^{succ}_{0} t_{0}^2v_{0} +2\norm{u_0 }^2  \right)  + \sum_{k=1}^{K} \lambda_{k} ^2\nonumber\\
    &= 3\alpha^{succ}_{0} t_{0}^2v_{0} +2\norm{u_0 }^2 + \sum_{k=1}^{K} \lambda_{k} \cdot \left[\sum_{i=1}^{k-1} \lambda_{i} \right]  + 2\sum_{k=1}^{K} \lambda_{k} ^2,  
\end{align*}
which is our desired bound. 
~\qed 
\end{proof}

We make the following additional assumption on the output of oracle SFO. 
\begin{assumption}\label{assumpt:fista_noise}
At every iteration $k$,  $g_k=g(y_k,\xi_k)$ computed using the stochastic first-order oracle SFO$_{\constG, p}(y_k,\alpha_k)$ satisfies 
\begin{align}\label{eq:fista_noise}
&\EE_{\xi_k} \left[ \| (\nabla f(y_{k} )- G_{k} )\|^2 | \pastone\right] \leq \frac{1}{\alpha_{k}^2t_{k}^2\cdot k^{2+ \beta}},
\end{align}
for some fixed $\beta>0$.
\end{assumption}

This assumption requires the error of the gradient estimate to decay a little bit faster than $\frac{1}{t_k k}$. In  \cite{Schmidt-Roux-Bach}
the deterministic error decays faster than $\frac{1}{k^2}$ to retain accelerated convergence rate. Our requirements here are similar, since due to Lemma \ref{lem:fista_linesearch} and the expected number of {\em good} iterations, we have that $t_k ={\cal O}(k)$. 

Using Lemma \ref{lem:successful_step_bound} and the Assumption \ref{assumpt:fista_noise} we prove the following theorem: 

\begin{theorem}[Upper bound of the number of large true successful steps]\label{th:bound_expectation_N_TS}
Under Assumptions \ref{assumpt:convex_smooth}, \ref{assumpt:bounded_below}
and  \ref{assumpt:fista_noise}, we have  
 \begin{align*}
 \EE [N_G] \leq \sqrt{
 \frac{2}{\stepsizebar\epsilon} \left [3\alpha^{succ}_{0} t_{0}^2v_{0} +2\norm{u_0 }^2 + \frac{4(\beta + 2)^2}{\beta^2} + \frac{8(\beta+2)}{\beta+1}  
 \right] }. 
\end{align*}

\end{theorem}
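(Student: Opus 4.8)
The plan is to follow the logic of the ISTA argument in the proof of Theorem~\ref{th:bound_expectation_N_TS1}, but with the per-step telescoping sum $\sum_{k\in S}2\alpha_k v_k$ replaced by the single accelerated potential $2\alpha^{succ}_K t_K^2 v_K$ that is controlled by Lemma~\ref{lem:after_K}. First I would instantiate Lemma~\ref{lem:after_K} at $K=N_\epsilon-1$ and pass to the random variables $\Lambda_k=\norm{2\Alpha_k T_k(\nabla f(Y_k)-G_k)}$ with realizations $\lambda_k$. Because $N_\epsilon$ is the first index with $v_k\le\epsilon$, every $k\le N_\epsilon-1$ satisfies $v_k>\epsilon$; in particular $v_{N_\epsilon-1}>\epsilon$, so the left-hand side of Lemma~\ref{lem:after_K} is bounded below by $2\alpha^{succ}_{N_\epsilon-1}t_{N_\epsilon-1}^2\epsilon$.

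The crucial step -- and the point where acceleration enters -- is to turn $\alpha^{succ}_{N_\epsilon-1}t_{N_\epsilon-1}^2$ into a quadratic function of $N_G$ via part~2 of Lemma~\ref{lem:fista_linesearch}. Each good iteration is successful with $\Alpha_k\ge\stepsizebar$, hence contributes at least $\sqrt{\stepsizebar}/2$ to the sum $\sum_{k\in S}\sqrt{\alpha_k}/2$; dropping all remaining nonnegative terms gives $\sum_{k\in S,\,k\le N_\epsilon-1}\sqrt{\alpha_k}/2\ge N_G\sqrt{\stepsizebar}/2$, and Lemma~\ref{lem:fista_linesearch} then yields $\alpha^{succ}_{N_\epsilon-1}t_{N_\epsilon-1}^2\ge N_G^2\stepsizebar/4$. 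Combining this with the lower bound of the previous step and with the right-hand side of Lemma~\ref{lem:after_K} produces the pointwise inequality
\begin{align*}
\frac{\stepsizebar\epsilon}{2}\,N_G^2 \le 3\alpha^{succ}_0 t_0^2 v_0 + 2\norm{u_0}^2 + \sum_{k=1}^{N_\epsilon-1}\Lambda_k\Big(\sum_{i=1}^{k-1}\Lambda_i\Big) + 2\sum_{k=1}^{N_\epsilon-1}\Lambda_k^2 .
\end{align*}
This is exactly the FISTA analogue of the ISTA bound with $N_G$ replaced by $N_G^2$, which is the origin of the square root in the final statement.

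It then remains to take expectations. To recover $\EE[N_G]$ from a bound on $N_G^2$ I would invoke Jensen's inequality, $\EE[N_G]^2\le\EE[N_G^2]$, so it suffices to bound $\EE[N_G^2]$. The expectation of the random sums on the right is computed exactly as in Theorem~\ref{th:bound_expectation_N_TS1}: Assumption~\ref{assumpt:fista_noise} gives $\EE[\Lambda_k^2\mid\pastone]\le 4\alpha_k^2 t_k^2\cdot\frac{1}{\alpha_k^2 t_k^2 k^{2+\beta}}=\frac{4}{k^{2+\beta}}$, so the extra $t_k^2$ in the FISTA noise condition precisely cancels the $t_k$ appearing in the definition of $\lambda_k$ and reproduces the same per-iteration moment bound as in the ISTA case. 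From here the conditional estimates $\EE[\Lambda_k\mid\pastone]\le 2k^{-1-0.5\beta}$ and $\EE[\Lambda_k\Lambda_i]\le 4(ki)^{-1-0.5\beta}$ for $i<k$, together with the general Wald identity (applicable since $N_\epsilon$ is a hitting time and the summands have finite, summable means), give $\EE\big[\sum_{k=1}^{N_\epsilon}\Lambda_k^2\big]\le\frac{4(\beta+2)}{\beta+1}$ and $\EE\big[\sum_{k=1}^{N_\epsilon}\Lambda_k\sum_{i=1}^{k-1}\Lambda_i\big]\le\frac{4(\beta+2)^2}{\beta^2}$. Substituting these bounds (and the deterministic terms) into the expectation of the displayed inequality and taking the square root yields the claimed estimate.

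The main obstacle is conceptual rather than computational: it is the passage from the sum-based ISTA potential to the single accelerated potential $\alpha^{succ}_K t_K^2 v_K$ and the realization that Lemma~\ref{lem:fista_linesearch} is precisely the device that converts this potential into $N_G^2$. This quadratic dependence, combined with Jensen's inequality, is what delivers the $\mathcal{O}(1/\sqrt{\epsilon})$ scaling of $\EE[N_G]$ characteristic of accelerated methods. The only remaining care is justifying the interchange of expectation with the randomly terminated sums through Wald's identity; since the per-iteration moment bounds are identical to the ISTA case once the $t_k^2$ factor in Assumption~\ref{assumpt:fista_noise} is accounted for, this step is routine.
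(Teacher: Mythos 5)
Your proposal is correct and follows essentially the same path as the paper's own proof: instantiating Lemma \ref{lem:after_K} at $K=N_\epsilon-1$, lower-bounding the potential by $\frac{1}{4}N_G^2\stepsizebar\epsilon$ via part 2 of Lemma \ref{lem:fista_linesearch}, taking expectations with the Wald-identity argument from Theorem \ref{th:bound_expectation_N_TS1} (using the $t_k^2$ cancellation in Assumption \ref{assumpt:fista_noise}), and finishing with Jensen's inequality $(\EE[N_G])^2\le\EE[N_G^2]$. No gaps; this matches the paper's argument step for step.
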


\begin{proof} 
Since the statement of Lemma \ref{lem:after_K} holds for every realizations of Algorithm \ref{alg:fista_bktr_line_search}, for $K = N_\epsilon -1$ we have: 
\begin{align*}
    2\Alpha^{succ}_{N_\epsilon -1} T_{N_\epsilon -1}^2 (F(X_{N_\epsilon -1}) - F^*)\leq 3\alpha^{succ}_{0} t_{0}^2v_{0} +2\norm{u_0 }^2 + \sum_{k=1}^{N_\epsilon -1} \Lambda_{k} \cdot \left[\sum_{i=1}^{k-1} \Lambda_{i} \right]  + 2\sum_{k=1}^{N_\epsilon -1} \Lambda_{k} ^2.
\end{align*}
Recall that $N_\epsilon$ be the stopping time that $N_\epsilon = \inf\{k: F(X_k) - F^* \leq \epsilon\}$. Hence for $K = N_\epsilon -1$, we have that $F(X_K) - F^* > \epsilon$ and 
\begin{align*}
    2\Alpha^{succ}_{N_\epsilon -1} T_{N_\epsilon -1}^2 &\leq \frac{3\alpha^{succ}_{0} t_{0}^2v_{0} +2\norm{u_0 }^2 + \sum_{k=1}^{N_\epsilon -1} \Lambda_{k} \cdot \left[\sum_{i=1}^{k-1} \Lambda_{i} \right]  + 2\sum_{k=1}^{N_\epsilon -1} \Lambda_{k}^2}{F(X_{N_\epsilon -1}) - F^*} \nonumber\\
    &\leq \frac{3\alpha^{succ}_{0} t_{0}^2v_{0} +2\norm{u_0 }^2 + \sum_{k=1}^{N_\epsilon} \Lambda_{k} \cdot \left[\sum_{i=1}^{k-1} \Lambda_{i} \right]  + 2\sum_{k=1}^{N_\epsilon} \Lambda_{k} ^2}{\epsilon}.
\end{align*}
Let $\mathcal{N}_G$ be the set of \textit{good} iterations  (i.e. true and successful iterations $k$ with $\alpha_k\geq \stepsizebar $), with $ k \leq N_\epsilon -1$, then $N_G$ is the cardinality of that set. 
Using the inequality $ \alpha^{succ}_{K} t_{K}^2 \geq \left(\sum_{k \in S, k \le K } \sqrt{\alpha_k} /2\right)^2$ for $K =  N_\epsilon-1$ from Lemma \ref{lem:fista_linesearch} we have 
\begin{align*} 
\Alpha^{succ}_{N_\epsilon -1} T_{N_\epsilon -1}^2 
&\geq \left(\sum_{k \in S, k \le N_\epsilon -1} \sqrt{\Alpha_k} /2\right)^2 \nonumber\\
&\geq \left(\sum_{k\in \mathcal{N}_G, k \le N_\epsilon -1 } \sqrt{\Alpha_k} /2\right)^2
\geq \left(N_G \sqrt{\stepsizebar} /2\right)^2 
= \frac{1}{4} (N_G)^2 \stepsizebar.
\end{align*}
Substituting this to the previous inequality we get 
 \begin{align*}
(N_G)^2  &\leq  \frac{2}{\stepsizebar\epsilon} \left [3\alpha^{succ}_{0} t_{0}^2v_{0} +2\norm{u_0 }^2 + \sum_{k=1}^{N_\epsilon} \Lambda_{k} \cdot \left[\sum_{i=1}^{k-1} \Lambda_{i} \right]  + 2\sum_{k=1}^{N_\epsilon} \Lambda_{k} ^2   \right].
\end{align*}
Taking total expectation we have: 
\begin{align*}
\EE [(N_G)^2 ]   &\leq \frac{2}{\stepsizebar\epsilon} \left [3\alpha^{succ}_{0} t_{0}^2v_{0} +2\norm{u_0 }^2 
+ \EE \left[ \sum_{k=1}^{N_\epsilon } \Lambda_{k} \cdot \left(\sum_{i=1}^{k-1} \Lambda_{i} \right)  \right] + 2\EE \left[\sum_{k=1}^{N_\epsilon }  \Lambda_{k} ^2  \right] 
 \right].
\end{align*}
From \eqref{eq:fista_noise} we have: 
\begin{align*}
    \EE \left[\Lambda_{k}^2| \pastone\right] = \EE \left[4\alpha_{k}^2t_k^2 \| (\nabla f(y_{k} )- G_{k} )\|^2 | \pastone\right] \leq \frac{4}{k^{2+ \beta}},
\end{align*}
Hence using a similar argument as Theorem \ref{th:bound_expectation_N_TS1} we get 
\begin{align*}
\EE [(N_G)^2 ]
 &\leq \frac{2}{\stepsizebar\epsilon} \left [3\alpha^{succ}_{0} t_{0}^2v_{0} +2\norm{u_0 }^2  + \frac{4(\beta + 2)^2}{\beta^2} + \frac{8(\beta+2)}{\beta+1} 
 \right].
\end{align*}
By the expectation inequality: 
 \begin{align*}
\EE [N_G] \leq \sqrt{
 \frac{2}{\stepsizebar\epsilon} \left [3\alpha^{succ}_{0} t_{0}^2v_{0} +2\norm{u_0 }^2 + \frac{4(\beta + 2)^2}{\beta^2} + \frac{8(\beta+2)}{\beta+1}  
 \right] }.
\end{align*}
we get the final results of Theorem \ref{th:bound_expectation_N_TS}. 
~\qed \end{proof}

Our final result is a straightforward corollary of Theorem \ref{th:framework} and \ref{th:bound_expectation_N_TS}. 
\begin{theorem}\label{th:final_results}
Under Assumptions \ref{assumpt:convex_smooth}, \ref{assumpt:bounded_below}, \ref{assumpt:key} and Assumption \ref{assumpt:fista_noise} the number of iterations $N_\epsilon$ to reach an $\epsilon$-accurate solution satisfies the following bound: 
\begin{align*}
\EE (N_\epsilon) \leq \frac{2p}{(2p-1)^2}\left ( \left(
 \frac{8 \left [3\alpha^{succ}_{0} t_{0}^2v_{0} +2\norm{u_0 }^2 + \frac{4(\beta + 2)^2}{\beta^2} + \frac{8(\beta+2)}{\beta+1}  
 \right]}{\stepsizebar\epsilon} \right)^{\frac{1}{2}}+ \log_\gamma\left(\frac{\stepsizebar}{\alpha_1}\right)\right) +1,
\end{align*}
where  $\stepsizebar $ is defined in Lemma \ref{lem:good_est_good_model_successful}.
\end{theorem}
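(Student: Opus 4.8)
The plan is to obtain this result purely by composing the two preceding theorems, so essentially no new analytic work is required. First I would invoke Theorem~\ref{th:framework}, which (under Assumptions~\ref{assumpt:convex_smooth}, \ref{assumpt:bounded_below}, \ref{assumpt:key}) already furnishes the master inequality
\begin{align*}
\EE(N_\epsilon-1)\leq \frac{2p}{(2p-1)^2}\left(2\EE(N_G)+\log_\gamma\left(\frac{\stepsizebar}{\alpha_1}\right)\right),
\end{align*}
which holds verbatim for Algorithm~\ref{alg:fista_bktr_line_search} because, as noted in Section~\ref{subsec:outline_algo}, its step-size dynamics and sufficient-decrease test coincide with those of the ISTA step search, so the entire development of Sections~\ref{sec:stoc_proc} and \ref{subsec:successful} transfers unchanged. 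The only quantity left unspecified in this bound is $\EE(N_G)$, the expected number of \emph{good} iterations.

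Next I would substitute the explicit estimate from Theorem~\ref{th:bound_expectation_N_TS}, which, under the additional noise condition of Assumption~\ref{assumpt:fista_noise}, bounds $\EE(N_G)$ by a square root of the accumulated-error quantity divided by $\stepsizebar\epsilon$. Plugging this into the master inequality produces a $2\EE(N_G)$ term, and the one piece of arithmetic to watch is that the leading factor of $2$ must be folded \emph{inside} the square root: since $2\sqrt{x}=\sqrt{4x}$ and $4\cdot 2=8$, the product $2\EE(N_G)$ is dominated by $\sqrt{8\,[\,\cdots\,]/(\stepsizebar\epsilon)}$, which is exactly the radicand appearing in the stated bound. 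Finally, adding $1$ to both sides converts $\EE(N_\epsilon-1)$ into $\EE(N_\epsilon)$ and yields the claimed inequality.

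The honest assessment is that there is no real obstacle here; the theorem is a bookkeeping corollary and all the difficulty was front-loaded into its two ingredients. If anything deserves emphasis, it is the structural contrast with the ISTA case (Theorem~\ref{th:final_results_ista}): there $\EE(N_G)$ enters linearly, producing an $\Ocal(1/\epsilon)$ complexity, whereas here $\EE(N_G)$ is controlled only through $(N_G)^2$ via the inequality $\Alpha^{succ}_{K}T_K^2\geq(\sum_{k\in S}\sqrt{\Alpha_k}/2)^2$ of Lemma~\ref{lem:fista_linesearch}, and hence enters under a square root, producing the accelerated $\Ocal(1/\sqrt{\epsilon})$ dependence. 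Thus the sole thing to verify while writing the proof is that the $\sqrt{\cdot}$ structure of Theorem~\ref{th:bound_expectation_N_TS} is carried through the substitution intact, rather than being inadvertently linearized.
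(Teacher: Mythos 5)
Your proposal is correct and follows exactly the paper's route: the paper itself presents this theorem as a straightforward corollary of Theorem~\ref{th:framework} and Theorem~\ref{th:bound_expectation_N_TS}, with the factor of $2$ absorbed into the radical via $2\sqrt{2x/(\stepsizebar\epsilon)}=\sqrt{8x/(\stepsizebar\epsilon)}$ and the $+1$ accounting for the shift from $\EE(N_\epsilon-1)$ to $\EE(N_\epsilon)$, precisely as you describe. Your observation that Theorem~\ref{th:framework} transfers to Algorithm~\ref{alg:fista_bktr_line_search} because its step-size dynamics and sufficient-decrease test coincide with those of Algorithm~\ref{alg:ista_bktr_line_search} is also the justification the paper gives in Section~\ref{subsec:outline_algo}.
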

\section{Conclusions}\label{sec:summary}

Various stochastic optimization methods with adaptive step-size parameters have been developed and analyzed in recent literature \cite{Blanchet-Cartis-Menickelly-Scheinberg,Cartis-Scheinberg,Jin-Scheinberg-Xie,Paquette-Scheinberg,Scheinberg-Xie}. These works share common features, specifically that stochastic gradient estimates are not assumed to be unbiased, but are simply assumed to be sufficiently accurate with large enough, fixed, probability. This accuracy is dynamically adjusted as the algorithm progresses. The analytical framework of these methods relies on a certain type of martingale analysis. It was unclear, however, if this type of analysis under similar assumptions, could be carried out for accelerated gradient methods and/or for proximal gradient methods in convex composite optimization. We resolve this question positively in this paper. We extend iteration complexity analysis of a backtracking FISTA method to case of stochastic, possibly biased gradient estimates (but accurate function values). We show that the stochastic step-size parameter $\alpha_k$ of our accelerated proximal methods is lower bounded by a random walk with a positive drift, same as is shown in \cite{Blanchet-Cartis-Menickelly-Scheinberg,Cartis-Scheinberg,Jin-Scheinberg-Xie,Paquette-Scheinberg,Scheinberg-Xie}. This, in turn, implies that establishing a high probability lower bound on  $\alpha_k$ using recent results from \cite{Jin-et-al} is straightforward.
Extending this work to the case of stochastic function estimates and obtaining high probability bound of complexity, similar to \cite{Jin-Scheinberg-Xie} is subject of future work. In addition, investigating shuffling momentum methods \cite{Tran-Nguyen-Tran-Dinh,Tran-Scheinberg-Nguyen} in proximal setting is an interesting problem.

\paragraph{Acknowledgement} The authors are grateful to Raghu Bollapragada for useful discussion on the topics of sample complexity of this method and to the anonymous referees for the careful reading of the manuscript and the comments that help us improve it.


\bibliographystyle{plainnat}
\bibliography{refs}





\section{Appendix: Expected complexity vs. expected convergence rate.}

While deterministic counterparts of  our bound on $\mathbb{E}[N_\epsilon]$  and more common bounds on $\mathbb{E}[F(x_k)- F^*]$ are interchangeable,  here we show that the stochastic bounds are not.  
We recall that, for convex setting, expected convergence rate results show the bound $\EE (F(X_k)-F^*)\leq \mathcal{O}(\frac{1}{k})$ while our expected complexity bound implies that $\EE(N_\epsilon)\leq \mathcal{O}(\frac{1}{\epsilon}) $ where $N_\epsilon$ is the hitting time for the event $\{F(X_k)-F^*\leq \epsilon\}$. Below we present small examples of simple nonnegative stochastic processes, for which one of the bounds holds, but not the other. 

\textbf{General stochastic process.}
Let $\{Y_n\}_{n\ge1} $ be some nonnegative sequence of random variables, $\epsilon > 0$ be arbitrarily small.
Then $N_\epsilon$ is the hitting time for the event $\{Y_n\leq \epsilon\}$, respectively. We define 
$\bar{N}_\epsilon$ to be the minimum index $k$ satisfying that $\EE(Y_{k})\leq \epsilon $. If there is no such index then $\bar{N}_\epsilon $ is defined to be $+\infty$.

\textbf{Example 1.} This example demonstrates that $\EE[N_\epsilon]$  finite or bounded by $\mathcal{O}(\frac{1}{\epsilon})$ does not imply that $\bar{N}_\epsilon$ is finite or bounded by $\mathcal{O}(\frac{1}{\epsilon})$. First, we consider the following sequence: 
\begin{align*}
    Y_n = \begin{cases}
        1 &\text{ with prob. } \frac{1}{2},\\
        \epsilon &\text{ with prob. } \frac{1}{2}.
    \end{cases}
\end{align*}
It is easily seen that $\EE[N_\epsilon]$ is finite:
\begin{align*}
    \EE[N_\epsilon] &= \frac{1}{2} \cdot 1 + \frac{1}{2} \cdot (1 + \EE[N_\epsilon]),\quad
    \EE[N_\epsilon] = 2 \leq \infty.
\end{align*}
However the expected value of this sequence never reachs $\epsilon$:
\begin{align*}
    \EE[Y_n] = \frac{1+\epsilon}{2} > \epsilon \text{ when } \epsilon \leq 1, \bar{N}_\epsilon = \infty.
\end{align*}

Similarly, the following sequence satisfies $\EE[N_\epsilon] = \frac{1}{\epsilon}$ and $\EE[Y_n] > \epsilon, \bar{N}_\epsilon = \infty$.
\begin{align*}
    Y_n = \begin{cases}
        1 &\text{ with prob. } 1 -\epsilon,\\
        \epsilon &\text{ with prob. } \epsilon.
    \end{cases}
\end{align*}

\textbf{Example 2.} This example shows that $\bar{N}_\epsilon$  finite or bounded by $\mathcal{O}(\frac{1}{\epsilon})$ does not imply that $\EE[N_\epsilon]$ is finite or bounded by $\mathcal{O}(\frac{1}{\epsilon})$. We consider the following sequence: 
\begin{align*}
    Y_n = \begin{cases}
        \frac{\epsilon}{1-\epsilon} &\text{ with prob. } 1 -\epsilon,\\
        0 &\text{ with prob. } \epsilon.
    \end{cases}
\end{align*}
We have that $\bar{N}_\epsilon$ is finite, i.e. $\EE[Y_n] = \epsilon \leq \epsilon, \bar{N}_\epsilon = 1$; while $\EE[N_\epsilon]$ grows arbitrarily large when $\epsilon$ approaches 0: 
\begin{align*}
    \EE[N_\epsilon] &= \epsilon \cdot 1 + (1 -\epsilon) \cdot (1 + \EE[N_\epsilon]),\quad 
    \EE[N_\epsilon] = \frac{1}{\epsilon}.
\end{align*}

Our final example is more extreme, where $Y_n$ is bounded away from 0 for a small probability: 
\begin{align*}
    Y_n = \begin{cases}
        \frac{1}{2} &\text{ with prob. } \epsilon,\\
        \frac{\epsilon}{1-\epsilon} &\text{ with prob. } \frac{n}{n+1} - \epsilon,\\
        0 &\text{ with prob. } \frac{1}{n+1}.
    \end{cases}
\end{align*}
We have that $\bar{N}_\epsilon=  1$:
\begin{align*}
    \EE[Y_n] &= \frac{\epsilon}{2} + \frac{\epsilon}{1-\epsilon} \left(\frac{n}{n+1}-\epsilon \right),\\
    \EE[Y_n] &\leq \epsilon \text{ when }  \frac{\epsilon}{1-\epsilon} \left(\frac{n}{n+1}-\epsilon \right) \leq \frac{\epsilon}{2}, \text{ satisfied when } n = 1.
\end{align*}
while $\EE[N_\epsilon]$ is infinite: 
\begin{align*}
    \EE[N_\epsilon] &= \frac{1}{2} \cdot 1 + \frac{1}{2} \cdot \left(\frac{1}{3} \cdot 2 + \frac{2}{3} \cdot \left(\frac{1}{4} \cdot 3 + \frac{3}{4} \cdot \left(\frac{1}{5} \cdot 4 + \frac{4}{5} \cdot \left( \frac{1}{6} \cdot 5 + \frac{5}{6} \cdot \left(...  \right) \right)  \right)  \right)  \right)\\
    &= \frac{1}{2} \cdot 1 + \frac{1}{2} \cdot \frac{1}{3} \cdot 2 + \frac{1}{2} \cdot \frac{2}{3} \cdot \frac{1}{4} \cdot 3 + \frac{1}{2} \cdot \left( \frac{2}{3} \cdot \left(\frac{3}{4} \cdot \left(\frac{1}{5} \cdot 4 + \frac{4}{5} \cdot \left( \frac{1}{6} \cdot 5 + \frac{5}{6} \cdot \left(...  \right) \right)  \right)  \right)  \right)\\
    &= \frac{1}{2}  +  \frac{1}{3}  + \frac{1}{4} + \frac{1}{5} +\frac{1}{6} +...\\
    &= \infty.
\end{align*}

\end{document}